\def\centerarc[#1](#2)(#3:#4:#5)
\theoremstyle{plain}
\newtheorem{theorem}{Theorem}
\newtheorem{corollary}{Corollary}
\newtheorem{lemma}{Lemma}
\newtheorem{proposition}{Proposition}
\newtheorem{conjecture}{Conjecture}
\theoremstyle{definition}
\newtheorem{definition}{Definition}
\newtheorem{remark}{Remark}
\newtheorem{example}{Example}
\newtheorem{question}{Question}
\newtheorem{expectation}{Expectation}
\newcommand{\p}{\partial}
\newcommand{\C}{\mathbb{C}}
\newcommand{\R}{\mathbb{R}}
\newcommand{\CP}{\mathbb{CP}}
\newcommand{\tz}{\Tilde{z}}
\newcommand{\tw}{\Tilde{w}}
\newcommand{\tx}{\Tilde{x}}
\title{Some models for bubbling of (log) Kähler-Einstein metrics}
\author{Martin de Borbon and Cristiano Spotti}
\address{Department of Mathematical Sciences, Loughborough University, Loughborough LE11 3TU (United Kingdom), \emph{m.de-borbon@lboro.ac.uk}}
\address{Department of Mathematics, Aarhus University,  Ny Munkegade 118 8000 Aarhus  (Denmark), \emph{c.spotti@math.au.dk}}
\begin{document}

\maketitle

\begin{abstract}
    We investigate aspects of the metric bubble tree for non-collapsing degenerations of (log) Kähler-Einstein metrics in complex dimensions one and two, and further describe a conjectural higher dimensional picture. 
\end{abstract}


\section{Introduction}

In the last decade there has been an intense study of non-collapsed limits of Kähler-Einstein (KE) manifolds and their relation to algebraic geometry. For instance, results include the algebricity of non-collapsed Gromov-Hausdorff (GH) limits (\cite{donsun1, liuszek}), which led to the solution of the Yau-Tian-Donaldson conjecture in the Fano case (\cite{cds, datarszek, chensunwang, bermanboucksomjonsson}), the construction of Fano K-moduli spaces (e.g., \cite{blumxu, SSY, odaka2, LWX}), and the algebro-geometric detection of the metric tangent cone at the singularities (\cite{donsun2, chili, liwangxu}). 

When a singularity is formed, the metric tangent cone is the first approximation of the geometry of the space around such singular point. However, during the singularity formation, the metrics contract at different rate certain portions of the degenerating spaces. From a metric point of view, such phenomenon is captured by the so-called \emph{metric bubble tree}, which encodes all possible rescaled limits of a degenerating family, including the metric tangent cone at the singularity as the first non-trivial rescaled limit. In the KE case, these resulting limiting bubbles are, in general still singular, asymptotically conical Calabi-Yau (CY) spaces.

This metric picture leads us to the following natural problem: 
given a \emph{flat family of non-collapsing Kähler-Einstein spaces} and a singular point $p_0 \in \mbox{Sing}(X_0)$ 
$$\begin{tikzcd}[column sep=0pt,nodes={inner xsep=0pt,outer xsep=0pt}]
   p_0 \in X_0 & \hookrightarrow & \mathcal{X}\arrow[d]  \\
  &   & \Delta, 
\end{tikzcd}$$
how can we find all the \emph{rescaled} pointed Gromov-Hausdorff limits (i.e., \emph{bubbles}) $$\lim_{i\rightarrow \infty} (\lambda(t_i)X_{t_i}, p_{t_i})=B_{\infty}$$
at points $p_{t_i} \in X_{t_i}$ such that $p_{t_i}\rightarrow p_0$ and $\lambda(t_i) \rightarrow +\infty$, as $t_i\rightarrow 0$? More precisely, one can ask the following, possibly interrelated, questions. Is there a \emph{purely (local) algebro-geometric way of describing such metric bubbles}, somewhat generalizing for families Chi Li's \cite{chili} normalized volume for detecting the tangent cone? What kind of \emph{structure} does the space $\mathcal{T}=\{B_\infty\}$ of all such bubbles $B_\infty$ may have?

In this experimental elementary note we start to investigate this problem in low dimensions. This, combined with the recent differential geometric foundational work of S. Sun \cite{sun} refining Donaldson-Sun \cite{donsun2}, could give some evidence of the emergence of an algebro-geometric general theory of multiscale non-collapsing for families of KE varieties. 

\begin{remark} Of course similar questions linking algebraic and differential geometry of rescaled limits are relevant also in case of (local and global) geometric \emph{collapsing} of KE metrics. This is expected to be linked to deeper non-Archimedean aspects of a degenerating family (e.g., \cite{odaka1}), but we will not discuss this problem here.
\end{remark}

We refer the readers to the sections below for the precise results, but here we give a quick survey of the content of the paper.  In Section \ref{sec:1dim} we describe bubbling in the case of conical (log) KE metrics on Riemann surfaces, establishing a receipt for their algebraic detection (Theorem \ref{thm:flatP1}). Moreover, for the case of flat metrics on the Riemann sphere, we investigate the relation of bubbling with the Deligne-Mostow and Deligne-Mumford compactifications of moduli spaces of such flat metrics (Theorem \ref{thm:DelMum}).
In Section \ref{sec:2dim} we then move to the complex two dimensional case where we study local bubbling for the $A_k$ ALE spaces thanks to the Gibbons-Hawing ansatz description, and discuss how the metric rescaling can indeed be detected by certain algebro geometric rescaling of the deformation family (Theorem \ref{thm:aklimits}). 
The logarithmic two dimensional situation is, however, more subtle: here we describe certain examples where algebraic rescalings alone would not be able to detect the bubbles, but rather ``weighted bubbles" similar to the ``weighted tangent cone" (see \cite{sun}). Finally, in Section \ref{sec:highdim}, we put together our observations to discuss some aspects of the emerging higher dimensional picture for bubbling of non-collapsing KE metrics. \\

\emph{Acknowledgments.} We thank Song Sun for the discussions on the topic on several occasions, and for having shared with us  his paper \cite{sun}. We thank Yuji Odaka for very useful comments on a draft of this paper that improved considerably our presentation. The first author thanks Dmitri Panov for discussions on flat metrics with cone points and Deligne-Mumford compactification. 

During the writing of this paper the second author was supported by a Villum Young Investigator Grant 0019098 and Villum YIP+ 53062.

\section{Bubbling in the one dimensional logarithmic case} \label{sec:1dim}

Let \(\beta\) be a positive real number and write $\C_{\beta}$ for the 
complex plane \(\C\) endowed with the metric of a cone of total angle \(2\pi\beta\) with its vertex located at \(0\). Using a standard complex coordinate $z$ the metric is given by the line element
\begin{equation} \label{eq:conemodel}
	|z|^{\beta-1}|dz| .
\end{equation}

Here we consider constant curvature metrics on Riemann surfaces with a finite number of cone points at which the metric is asymptotic, in local complex coordinates, to the model given by Equation \eqref{eq:conemodel} above, and we want to study \emph{the blow-up limits that arise after we rescale an algebraic family of such metrics where a cluster of cone points merge into a single conical singularity}. Geometrically, multiscale bubbling happens due to the different relative speed of collision among the conical points. Note that this problem has already been considered in the literature (\cite{mazzeo, monpan1, monpan2}), but below we will further investigate certain aspects more relevant to establish an algebraic description of bubblings.

\subsection{Infinite flat metrics with cone points}
We begin with a purely local (non-compact) description of bubbling of flat conical metrics. The metric bubbles relevant to this section can be written down explicitly in complex coordinates as explained in the next Lemma \ref{lem:inflatmet} (a schematic picture of the geometry of these metric bubbles is given later in Figure \ref{fig:inflatmet}).

\begin{lemma}\label{lem:inflatmet}
Let $p_1, \ldots, p_N \in \C$ and let $\beta_1, \ldots, \beta_N \in (0,1)$ be such that 
\begin{equation}
\sum_{i=1}^N (1-\beta_i) < 1 .
\end{equation}
Then the line element
\begin{equation}\label{eq:inflatmet}
\left(\prod_{i=1}^N |z-p_i|^{\beta_i-1}\right) |dz|
\end{equation}
defines a flat K\"ahler metric on $\C$ with the following properties.
\begin{enumerate}
\item[(i)] At the points $p_i$ it is locally isometric to a cone of total angle $2\pi\beta_i$.
\item[(ii)] Outside a compact set it is isometric to the open end of a cone of total angle $2\pi\gamma$ where $\gamma \in (0,1)$ is given by
\begin{equation}\label{eq:gamma}
1-\gamma = \sum_{i=1}^N (1-\beta_i) .
\end{equation} 
\end{enumerate}
\end{lemma}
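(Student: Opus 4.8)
The plan is to realize the metric \eqref{eq:inflatmet} through a holomorphic \emph{developing map} and then read off both the local cone structure at each $p_i$ and the asymptotic end directly from the analytic form of that map. Writing $g = e^{2u}|dz|^2$ with $u = \log\rho = \sum_{i=1}^N (\beta_i-1)\log|z-p_i|$, flatness away from the $p_i$ is immediate: each $\log|z-p_i|$ is harmonic on $\C\setminus\{p_i\}$, so $\Delta u \equiv 0$ there and the Gaussian curvature $K = -e^{-2u}\Delta u$ vanishes. (In complex dimension one the Kähler condition is automatic, the Kähler form being of top degree.) Equivalently, on any simply connected subset of $\C\setminus\{p_1,\dots,p_N\}$ the positive factor $\rho$ is the modulus of the holomorphic function $\Psi(z) = \prod_{i=1}^N (z-p_i)^{\beta_i-1}$, so $g = |\Psi|^2|dz|^2$. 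I would then introduce the locally single-valued (globally multivalued) developing map $\Phi(z) = \int^z \Psi(\zeta)\,d\zeta$, whose defining property $\Phi'=\Psi$ gives $|\Phi'| = \rho$; thus $\Phi$ pulls back the Euclidean metric $|dw|^2$ to $g$ and is a local isometry onto the flat plane. All of the geometry is then encoded in the holonomy of $\Phi$ and in its leading behaviour near each $p_i$ and near $\infty$.

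For part (i), near a fixed $p_i$ I factor $\Psi(z) = (z-p_i)^{\beta_i-1} h_i(z)$ with $h_i(z) = \prod_{j\neq i}(z-p_j)^{\beta_j-1}$ holomorphic and non-vanishing. Integrating $h_i(z) = h_i(p_i) + O(z-p_i)$ term by term gives $\Phi(z)-\Phi(p_i) = c_i\,(z-p_i)^{\beta_i}\bigl(1+O(z-p_i)\bigr)$ with $c_i = h_i(p_i)/\beta_i \neq 0$. Transporting $z$ once counterclockwise around $p_i$ multiplies $\Psi$ by $e^{2\pi i(\beta_i-1)} = e^{2\pi i\beta_i}$, so after recentering at $\Phi(p_i)$ the holonomy of $\Phi$ about $p_i$ is the pure rotation by $2\pi\beta_i$; this matches exactly the holonomy and leading term $w\mapsto w^{\beta_i}/\beta_i$ of the developing map of the standard cone model \eqref{eq:conemodel} of angle $2\pi\beta_i$. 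Comparing developing maps with equal holonomy yields a single-valued local isometry onto a punctured neighbourhood of the vertex of that cone, proving (i).

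For part (ii) the same machine runs at infinity. As $|z|\to\infty$ one has the convergent expansion $\Psi(z) = c\,z^{\gamma-1}\sum_{k\ge 0} d_k z^{-k}$ with $d_0=1$, where $\gamma-1 = \sum_i(\beta_i-1)$, i.e. $1-\gamma = \sum_i(1-\beta_i)$ as in \eqref{eq:gamma}; the hypothesis $\sum_i(1-\beta_i)<1$ together with $\beta_i<1$ forces $\gamma\in(0,1)$. Crucially, since $\gamma\notin\mathbb{Z}$ no exponent $\gamma-1-k$ equals $-1$, so term-by-term integration produces neither a logarithm nor an anomalous constant: $\Phi(z) = C + \sum_{k\ge 0} e_k z^{\gamma-k}$ with $e_0 = c/\gamma \neq 0$. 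Recentering by $\widetilde\Phi := \Phi-C$, every monomial scales by $e^{2\pi i\gamma}$ under $z\mapsto e^{2\pi i}z$, so $\widetilde\Phi\mapsto e^{2\pi i\gamma}\widetilde\Phi$: the holonomy about $\infty$ is a pure rotation by $2\pi\gamma$ fixing the origin, exactly that of the cone of angle $2\pi\gamma$ under its developing map $v\mapsto v^\gamma/\gamma$. Setting $v = (\gamma\,\widetilde\Phi(z))^{1/\gamma}$ then defines a \emph{single-valued} map, precisely because the two rotation numbers agree and cancel; one checks directly that it pulls back the cone metric, $|v|^{\gamma-1}|dv| = |d\widetilde\Phi| = |\Psi|\,|dz| = \rho\,|dz|$. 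Since $|\widetilde\Phi(z)| \sim (|c|/\gamma)|z|^\gamma\to\infty$, it carries the end $\{|z|>R\}$ isometrically onto the open end $\{|v|>\rho_0\}$ of that cone.

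The main obstacle, in both parts, is upgrading the \emph{local} isometry furnished by the developing map to a genuine \emph{global} isometry onto a cone neighbourhood, respectively cone end. This needs three ingredients beyond the formal matching above: injectivity of $\Phi$ (equivalently $\widetilde\Phi$) on a small punctured disc about $p_i$ and on $\{|z|>R\}$ for $R$ large, which follows from the non-vanishing of the leading coefficients $c_i,e_0$ after shrinking or enlarging the domains; the single-valuedness of the comparison map $v=(\gamma\widetilde\Phi)^{1/\gamma}$, which is exactly the equality of holonomy rotation numbers; and control of the subleading terms to confirm the image is the full cone end rather than merely asymptotic to it. The fact that $\gamma,\beta_i\notin\mathbb{Z}$ is what rules out logarithmic monodromy and renders the holonomy a clean rotation, and is the technical crux making the whole argument go through.
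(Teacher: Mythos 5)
Your proof is correct and takes essentially the same route as the paper: your comparison coordinate $v=(\gamma\widetilde{\Phi})^{1/\gamma}$ (and its analogue $\bigl(\beta_i(\Phi-\Phi(p_i))\bigr)^{1/\beta_i}$ at each $p_i$) is exactly the Troyanov coordinate $w=zf^{1/\beta}$ used in the paper's proof, since there $w^{\beta}/\beta$ is precisely the antiderivative of $z^{\beta-1}e^{h}$, and both arguments hinge on the same non-resonance condition ($\beta$ not a negative integer, equivalently no $z^{-1}$ term and hence no logarithmic monodromy). The only cosmetic differences are that you expand directly at infinity where the paper inverts via $\tilde{z}=1/z$ and applies its local claim with $\beta=-\gamma$, and that you establish single-valuedness through matching holonomy rotation numbers rather than through choosing a branch of $f^{1/\beta}$ near the nonzero value $f(0)=a_0$.
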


\begin{proof}
The Gaussian curvature $K$ of a conformal metric $g = e^{2u}|dz|^2$ is given by the formula $K = e^{-2u}\Delta u$, therefore $K=0$ if and only if $u$ is harmonic. In particular, the metric given by Equation \eqref{eq:inflatmet} is flat outside the points $p_i$ because the function $u = \sum_i (\beta_i-1) \log |z-p_i|$ is harmonic on $\C \setminus \{p_1, \ldots, p_N\}$.

The local uniformization theorem asserts that if $g = e^{2u}|dz|^2$  is flat then we can find a local holomorphic coordinate $w$ in which $g$ is equal to the Euclidean metric. Indeed, we can locally write the harmonic function $u$ as the real part of a holomorphic function, say  $u = \mbox{Re}(h)$. If we let $F$ be the solution of $F' = e^h$ with $F(0)=0$ and change coordinates to $w=F(z)$, then $g = |dw|^2$ as wanted. 

In order to prove items (i) and (ii) we consider a slightly more general situation.
Let \(\beta\) be a real number and let $g$ be the metric defined on a punctured disc \(D^* \subset \C\) around $0$ given by
\[g = e^{2u} |z|^{2\beta-2} |dz|^2 \]
where \(u\) is harmonic and extends smoothly over the origin.

\textbf{Claim.}  
If \(\beta \in \R \setminus \{-1, -2, \ldots\}\) then \(g\) is isometric in a neighbourhood of the origin to a cone of total angle \(2\pi\beta\) if \(\beta>0\), an infinite end of a cylinder if \(\beta=0\) or to an infinite end of a cone of total angle \(2\pi(-\beta)\) if \(\beta<0\).

We prove the claim following the arguments of Troyanov \cite[Proposition 2]{troyanov}.  Write \(u = \mbox{Re}(h)\) where \(h\) is holomorphic. We have a power series expansion  \(e^h= \sum_{k \geq 0} a_k z^k\) with \(a_0 \neq 0\) which is convergent on a disc. Assume first that \(\beta \neq 0\) and let \(f = \sum_{k\geq0} b_kz^k\) with \(b_k = (1+k/\beta)^{-1} a_k\)
(here we use that $\beta$ is not a negative integer). Then $f$ solves the equation
\begin{equation}\label{eq:solf}
\begin{aligned}
		\frac{d}{dz}\left( \frac{z^{\beta}}{\beta}f\right) &= z^{\beta-1} \sum_{k\geq 0} \left(b_k + \frac{k}{\beta}b_k\right)z^k \\
		&= z^{\beta-1} e^h .
\end{aligned}
\end{equation}
	
Note that $f(0) = a_0 \neq 0$ so we can take a single-valued holomorphic branch of \(f^{1/\beta}\) around the origin. Let
\[w= z f^{1/\beta} . \]
It follows from Equation \eqref{eq:solf} that \(w^{\beta-1}dw= z^{\beta-1}e^hdz\) and hence \(g=|w|^{2\beta-2}|dw|^2\) which implies the claim when $\beta \neq 0$.
On the other hand, if \(\beta=0\) then we solve \(df/dz = (a_0^{-1}e^h-1)/z\) and change coordinates to \(w=ze^f\) so that \(a_0w^{-1}dw = z^{-1}e^udz\) and $g=|a_0|^2 |w|^{-2}|dw|^2$. This finishes the proof of the claim.

Item (i) then follows immediately from the claim applied to $\beta = \beta_i$. Furthermore, outside a compact set $|z| \gg 1$ we can write the line element \eqref{eq:inflatmet} as $|z|^{\gamma-1} e^u |dz|$ where $u$ is harmonic and bounded. Let $\tilde{z} =1/z$ so that close to $\tilde{z}=0$ we have $|\tilde{z}|^{-\gamma-1}e^u|d\tilde{z}|$. Hence item (ii) follows from the claim applied to $\beta = -\gamma$.
\end{proof}

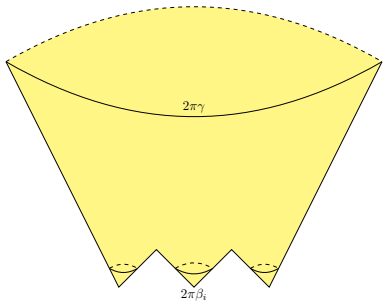
\begin{figure}[h]
	\centering
	\scalebox{.5}{
		\begin{tikzpicture}
			\fill[yellow!60] (-5, 5) -- (-2,-1) -- (-1, 0) -- (0,-1) -- (1, 0) -- (2,-1) -- (5,5) to[bend right] (-5,5);
			\draw (-5, 5) -- (-2,-1) -- (-1, 0) -- (0,-1) -- (1, 0) -- (2,-1) -- (5,5);
		  \draw[dashed] (-5,5) to[bend left] (5,5);
            \draw (-5,5) to[bend right] (5,5);
            \draw (-2.25, -.5) to[bend right] (-1.5,.-.5);
            \draw[dashed] (-2.25, -.5) to[bend left] (-1.5,.-.5);
            \draw (1.5, -.5) to[bend right] (2.25,.-.5);
            \draw[dashed] (1.5, -.5) to[bend left] (2.25,.-.5);
            \draw (-.5,-.5) to[bend right] (.5,-.5);
            \draw[dashed] (-.5,-.5) to[bend left] (.5,-.5);

            \node at (0,-1.2) {$2\pi\beta_i$};
             \node at (0,3.8) {$2\pi\gamma$};
		\end{tikzpicture}
	}
	\caption{Infinite flat metric described by Equation \eqref{eq:inflatmet}.}
	\label{fig:inflatmet}
\end{figure}

\begin{remark}
    Somewhat surprisingly, Lemma \ref{lem:inflatmet} \emph{requires} the angle at infinity $\gamma$ to be non-integer. Since we are restricting to the case that $\beta_i \in (0,1)$, it is immediate from Equation \eqref{eq:gamma} that $\gamma \in (0,1)$. On the other hand, if the angle at infinity is an integer multiple of $2\pi$ then Item (ii) of Lemma \ref{lem:inflatmet} might indeed not hold.
	For example, the infinite flat metric with two cone points of angles $\pi$ and $3\pi$ given by
	\[|z+1|^{-1/2} |z-1|^{1/2} |dz|\]
	has an end asymptotic to the Euclidean plane (i.e. $\gamma=1$) but it is not isometric to it outside a compact set. See Figure \ref{fig:integerangle}.
\end{remark}

\begin{figure}[h]
	\centering
	\scalebox{0.5}{
		\begin{tikzpicture}
			
			\fill[yellow!60] (-5, 0) -- (0,0) -- (0, -2) -- (5, -2) -- (5,3) -- (-5, 3);
			\draw (0,-2) to (0,0);
			\draw (-5,0) to (0,0);
			\draw (0,-2) to (5,-2);
			\draw[red, dashed] (-3,0) -- (-3,2) -- (3,2) -- (3,-2);
			
			\centerarc[](0,0)(-90:180:.3);
			\centerarc[](0,-2)(0:90:.3);
			
			\node[scale=1.5] at (.6, 0) {\(\frac{3}{2}\pi\)};
			\node[scale=1.5] at (.6, -1.6) {\(\frac{1}{2} \pi\)};
		\end{tikzpicture}
	}
	\caption{The double of the dashed red line gives a rectangle with a pair of unequal opposite sides. Therefore this metric can't be isometric to $\R^2$ outside a compact set.}
	\label{fig:integerangle}
\end{figure}
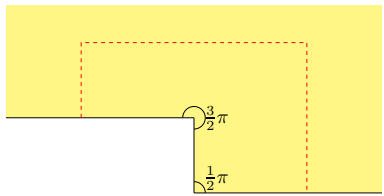

\subsection{Local models for singularity formation}

Next we consider families of infinite flat metrics $g_t$ of the form given by Equation \eqref{eq:inflatmet}, where the cone points $p_i = p_i(t)$ are \emph{holomorphic functions} on $\Delta = \{ |t| < 1\}$. We examine the case when all cone point collide at $0$, i.e. $p_i(0) = 0$ for all $i$, and describe algebraically the bubble tree $\mathcal{B}$ of all possible rescaled limits. Let's begin discussing a concrete trivial example:

\begin{example}[Collision of two cone points]
The basic case occurs when a pair of two cone points $\beta_1, \beta_2 \in (0,1)$ with $\beta_1+\beta_2>1$ collide into a single cone point $\gamma = \beta_1 + \beta_2 -1$. For instance, this is described by the family of metrics parameterized by $\epsilon(=|t|)>0$ given by
\[
g_{\epsilon} = |z+\epsilon|^{2\beta_1-2} |z -\epsilon|^{2\beta_2-2}|dz|^2 .
\]
If we let $z = \epsilon \tilde{z}$ then we see that the family
\[
g_{\epsilon} = \epsilon^{2\beta_1+2\beta_2-2} |\tz+1|^{2\beta_1-2} |\tz-1|^{2\beta_2-2}|d\tz|^2 
\]
is just a rescaling $g_{\epsilon} = \epsilon^{2\beta_1+2\beta_2-2} \cdot g_1$. The distance with respect to $g_{\epsilon}$ of the points $z = \pm \epsilon$ is a constant multiple of $\epsilon^{\beta_1+\beta_2-1}$ and the family $g_{\epsilon}$ converges to the $2$-cone $\C_{\gamma}$ of total angle $2\pi\gamma$ as $\epsilon \to 0$.
We can realize the metrics $g_{\epsilon}$ of this family by taking the \emph{double} (that is, by gluing along the boundary two copies) of the truncated wedge shown in yellow in Figure \ref{fig:collision}. As the red segment slides parallel to the left the distance between the $2$ cone points $\beta_1, \beta_2$ goes to $0$.
\end{example}

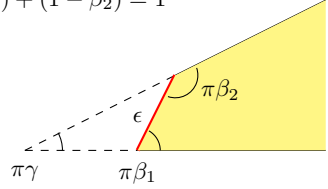
\begin{figure}[h]
	\centering
	\begin{tikzpicture}	
		\draw[dashed] (4, 0) to (5.5,0);
		\draw[] (5.5, 0) to (8,0);
		\draw[dashed] (4, 0) to (6,1);
		\draw[] (6, 1) to (8, 2);
		\draw[red, ultra thick] (5.5, 0) to (6,1);
		\fill[yellow!60] (5.5, 0) -- (8, 0) -- (8, 2) -- (6, 1) ;
		\centerarc[](5.5,0)(0:60:.3);
		\centerarc[](6,1)(-109:19:.3);
		\centerarc[](4,0)(0:28: .5);
		
		\draw  (4, -0.3) node (asd) [scale=.9] {\(\pi \gamma\)};
		\draw  (5.5, -.3) node (asd) [scale=.9] {\(\pi \beta_1\)};
		\draw  (6.6, .8) node (asd) [scale=.9] {\(\pi \beta_2\)};
        \draw (5.5,.45) node (asd) [scale=.9] {$\epsilon$};
        \draw  (4, 2) node (asd) [scale=.9] {$\gamma + (1-\beta_1) + (1-\beta_2) = 1$};

	\end{tikzpicture}
	\caption{Relation between colliding and limiting angles.}
	\label{fig:collision}
\end{figure}

In order to describe algebraically the general case of a cluster of cone points colliding we begin by reviewing a standard construction of a tree out of a finite set of holomorphic functions by grouping them according to their relative order of vanishing, as explained in \'Ettiene Ghys's book \cite[p.27]{ghys}.

Let $\mathcal{O}_{\C,0}$ be the local ring of germs of holomorphic functions defined in a neighbourhood of $0 \in \C$. For $f \in \mathcal{O}_{\C, 0}$ we let $\nu(f)$ be its order of vanishing at $0$. For any integer $k \geq 0$ we let $\sim_k$ be the equivalence relation on $\mathcal{O}_{\C,0}$ defined by $f \sim_k g$ if $\nu(f-g) \geq k$. This gives a nested sequence of equivalence relations in the sense that if $f \sim_{k'} g$ and $k' \geq k$ then $f \sim_{k} g$, i.e. the equivalence classes of $\sim_{k'}$ refine the equivalence classes of $\sim_{k}$.

Consider now a finite set $S = \{p_1, \ldots, p_N\} \subset \mathcal{O}_{\C,0}$. 
We construct a tree $\mathcal{T} = \mathcal{T}(S)$ by grouping the elements of $S$ according to their relative order of vanishing as defined next. The vertices of $\mathcal{T}$ are subsets of $S$ and the children of a node make a partition of it. We begin by defining a tree $\Tilde{\mathcal{T}}$ whose $k$-th level are the equivalence classes in $S$ given by $\sim_k$. The root of $\Tilde{\mathcal{T}}$ is the set $S$ itself, the children of $S$ are the equivalence classes of $\sim_1$ and so on. The tree $\Tilde{\mathcal{T}}$ is infinite; however, since the set $S$ is finite, there is a smallest $k_0$ such that the equivalence classes of $\sim_k$ are singletons $\{p_i\}$ for all $k \geq k_0$. 

\begin{definition}
$\mathcal{T}$ is the finite tree obtained up to the $k_0$-th level of $\Tilde{\mathcal{T}}$. Moreover, we bypass every interior node with a single child by replacing every occurrence of a triplet $\to \bullet \to$ with an edge $\to$.
\end{definition}

\begin{example}\label{ex:poltree}
If $S$ is made of the $4$ functions $p_1 = t$, $p_2 = t-t^4$, $p_3 = t + t^4$, and $p_4 = t^2$ then $\mathcal{T}$ is the tree shown in Figure \ref{fig:polytree}.
\end{example}

\begin{figure}[h]
\centering
\begin{tikzpicture}
\path (0,0) node(r) {$\{p_1, \ldots, p_4\}$} 
      (-2,-1) node(a) {$\{p_1, p_2, p_3\}$}
      (2,-1) node(b) {$\{p_4\}$}
      (-3,-2) node(c) {$\{p_1\}$}
      (-2,-2) node(d) {$\{p_2\}$}
      (-1,-2) node(e) {$\{p_3\}$};
\draw[->] (r) to (a);
\draw[->] (r) to (b);
\draw[->] (a) to (c);
\draw[->] (a) to (d);
\draw[->] (a) to (e);
\end{tikzpicture}
\caption{The tree $\mathcal{T}(S)$}
\label{fig:polytree}
\end{figure}
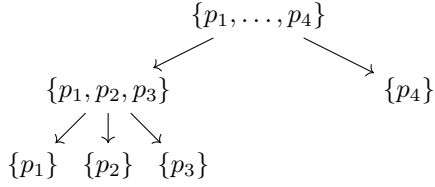

Let $S = \{p_1(t), \ldots, p_N(t)\}$ be holomorphic functions on the unit disc $\Delta$ and let $\beta_1, \ldots, \beta_N$ be real numbers in the interval $(0,1)$ such that $\sum_i (1-\beta_i) < 1$.
We proceed to define the bubble tree associated to the family of infinite flat metrics
\begin{equation}\label{eq:flatfamily}
\left(\prod_{i=1}^N |z-p_i(t)|^{\beta_i-1}\right) |dz| .
\end{equation}

Let $\textbf{v}$ be an interior (i.e. non-leaf) node of $\mathcal{T}(S)$ corresponding to a subset $V \subset \{1, \ldots, N\}$.
The children $\mathbf{v}_1, \ldots, \mathbf{v}_{\ell}$ of $\mathbf{v}$ give a partition $V_1 \cup \ldots \cup V_{\ell} = V$.
There is a non-negative integer $k$ such that all derivatives of $p_i$ at $0$ agree up to order $k$ for all $i \in V$ but the $k+1$ derivatives $p_i^{(k+1)}(0)$, $p_j^{(k+1)}(0)$ are different if $i \in V_r$ and $j \in V_s$ with $r \neq s$ and equal if $r=s$.
Let $q_1, \ldots, q_{\ell} \in \C$ be the values of the $(k+1)$-derivatives $p_i^{(k+1)}(0)$ for $i \in V$ corresponding to the $\ell$ different children of the node $\mathbf{v}$. For each $r = 1, \ldots, \ell$ we let $\Tilde{\beta}_r \in (0,1)$ be given by
\[
1-\Tilde{\beta}_r = \sum_{i \in V_r} (1-\beta_i) .
\]

\begin{definition}
For each $\textbf{v} \in \mathcal{T}$ as above we associate the following objects:
\begin{itemize}
\item $\mathcal{C}_{\mathbf{v}}$ is the $2$-cone $\C_{\tilde{\gamma}}$ of total angle $2\pi\Tilde{\gamma}$ where
\[
1-\Tilde{\gamma} = \sum_{i \in V} (1-\beta_i) .
\]
\item $B_{\mathbf{v}}$ is the infinite flat metric on $\C$ with cone angles $2\pi\Tilde{\beta}_1, \ldots, 2\pi\Tilde{\beta}_{\ell}$ at the points $q_1, \ldots, q_{\ell}$ and isometric to the end of the cone $\mathcal{C}_{\mathbf{v}}$  at infinity.
\end{itemize}
\end{definition}

\begin{definition}
The \emph{metric bubble tree} $\mathcal{B}$ is the set of all infinite flat metrics $B_{\mathbf{v}}$ labelled by the interior nodes $\mathbf{v} \in \mathcal{T}(S)$.
\end{definition}

Let $\pi: \Delta \times \C \to \Delta$ be the projection map $(t, z) \mapsto t$ and equip the fibres $\pi^{-1}(t)$ with the infinite flat metrics with cone points at $p_i = p_i(t)$ given by Equation \eqref{eq:flatfamily}. Fix a section of $\pi$ given by a holomorphic function $t \mapsto (t, s(t))$ with $s(0)=0$. 
The section determines a path in the tree \(\mathcal{T}\) by looking at the polynomials from $\{p_1, \ldots, p_N\}$ that approximate $s$ closer, as explained in the next paragraph.

Recall that $\mathcal{T} = \mathcal{T}(S)$ is the tree associated to $S= \{p_1, \ldots, p_N\}$.
Let $\mathcal{T}'$ be the tree associated to $\{s\} \cup S$. Take the path from the root of $\mathcal{T}'$ to the leaf $\{s\}$. Remove $s$ from every interior node of the path to obtain a path in $\mathcal{T}$ and label its interior nodes starting from the root of $\mathcal{T}$ as $\textbf{v}_1, \ldots, \textbf{v}_{\ell}$. See Figure \ref{fig:polytreepath}

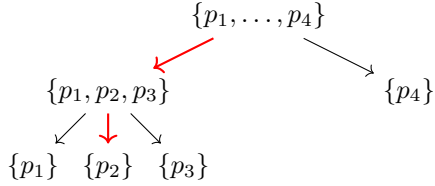
\begin{figure}[h]
\centering
\begin{tikzpicture}
\path (0,0) node(r) {$\{p_1, \ldots, p_4\}$} 
      (-2,-1) node(a) {$\{p_1, p_2, p_3\}$}
      (2,-1) node(b) {$\{p_4\}$}
      (-3,-2) node(c) {$\{p_1\}$}
      (-2,-2) node(d) {$\{p_2\}$}
      (-1,-2) node(e) {$\{p_3\}$};
\draw[->, red, thick] (r) to (a);
\draw[->] (r) to (b);
\draw[->] (a) to (c);
\draw[->, red, thick] (a) to (d);
\draw[->] (a) to (e);
\end{tikzpicture}
\caption{Path determined by the section $s(t) = t - t^4 + \text{(h.o.t.)}$ shown in red where \(\mathcal{T}\) is as in Example \ref{ex:poltree}.}
\label{fig:polytreepath}
\end{figure}

For each $\alpha>0$ we let $h_{\alpha}$ be the pointed Gromov-Hausdorff limit
\begin{equation}\label{eq:halpha}
h_{\alpha} =
\lim_{t \to 0} \left(\C, |t|^{-2\alpha} g_t, s(t) \right) .
\end{equation}
Let \(\textbf{v}_1, \ldots, \textbf{v}_{\ell}\) be the vertices of \(\mathcal{T}\) determined by the section $s$ and let \(B_{\textbf{v}_i}\) be the corresponding bubbles. The section $s$ also determines points $q_i \in B_{\textbf{v}_i}$ where $q_i$ is the value of the $k$-th derivative of the section and $k$ is the smallest integer such that the values of the  $k$-th derivatives of the functions in the equivalence class represented by \(\textbf{v}_i\) are not all equal.

\begin{lemma}\label{lem:rescaledsection}
There is an increasing sequence $0 = \alpha_0 < \alpha_1 < \ldots < \alpha_{\ell}$ such that, up to scale,
\(h_{\alpha}\) is isometric to:
	\begin{itemize}  
		\item the cone $\mathcal{C}_{\mathbf{v}_i}$ with base point the vertex if  \(\alpha_{i-1} < \alpha < \alpha_i\);
		\item the bubble $B_{\mathbf{v}_i}$ with base point $q_i$ determined by the section $s$ if \( \alpha = \alpha_i\). 
	\end{itemize}
 If $s$ is different from all $p_i$ then $h_{\alpha}$ is isometric to $\C$ for all $\alpha>\alpha_{\ell}$. While if $s = p_i$ for some $i$ then $h_{\alpha}$ is isometric to $\C_{\beta_i}$ for all $\alpha > \alpha_{\ell}$.
\end{lemma}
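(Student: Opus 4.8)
The plan is to analyze the rescaled family $|t|^{-2\alpha} g_t$ directly in complex coordinates by tracking how the cone points $p_i(t)$ separate under the change of variables dictated by the exponent $\alpha$. The key observation is that the section $s(t)$ organizes the cone points into a hierarchy of scales: along the path $\mathbf{v}_1, \ldots, \mathbf{v}_\ell$ in $\mathcal{T}$, the node $\mathbf{v}_i$ corresponds to an integer $k_i$ (its relative vanishing order), and I expect the critical exponents to be exactly $\alpha_i = k_i$ (the order at which the points in the children of $\mathbf{v}_i$ first separate). First I would introduce the rescaled coordinate $\tilde{z} = (z - s(t))/t^{k}$ for an appropriate integer $k$ depending on $\alpha$, so that the base point $s(t)$ is sent to the origin. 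Under this substitution, using that $|dz| = |t|^{k}|d\tilde{z}|$ and pulling out the scalar factors, the line element \eqref{eq:flatfamily} becomes $|t|^{k} \bigl(\prod_i |t^{k}\tilde{z} + s(t) - p_i(t)|^{\beta_i - 1}\bigr)|d\tilde{z}|$, and the whole point is to watch which factors $s(t) - p_i(t)$ stay comparable to $t^{k}\tilde{z}$ and which dominate or vanish.

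The core computation is a case analysis on the differences $s(t) - p_i(t)$. For each $i$, write $\nu_i = \nu(s - p_i)$ for the order of vanishing; these orders are precisely what the tree $\mathcal{T}'$ for $\{s\}\cup S$ records, and the leading coefficients at order $k$ are the values $q_i$ appearing in the statement. When $\alpha$ lies strictly between two consecutive critical values $\alpha_{i-1}$ and $\alpha_i$, I would show that, after rescaling so the base point is at the origin, every cone point $p_j(t)$ that has not yet separated from $s(t)$ collapses to $\tilde{z}=0$ while those that have already separated escape to infinity; the residual contribution from the collapsed cluster combines (via Equation \eqref{eq:gamma} applied to the index set $V$ defining $\mathbf{v}_i$) into a single cone of total angle $2\pi\tilde{\gamma}$, giving the cone $\mathcal{C}_{\mathbf{v}_i}$. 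When $\alpha$ equals a critical value $\alpha_i = k$, exactly the children $V_1, \ldots, V_\ell$ of $\mathbf{v}_i$ resolve into distinct limit points $q_1, \ldots, q_\ell$ (the distinct values of the $(k+1)$-st derivatives), each with merged angle $2\pi\tilde{\beta}_r$, which is exactly the description of $B_{\mathbf{v}_i}$ from the preceding definition; the base point $q_i$ tracks where $s$ itself lands. The Gromov-Hausdorff convergence then follows because Lemma \ref{lem:inflatmet} guarantees each limiting line element is itself a genuine infinite flat metric of the asserted type, and smooth convergence of the conformal factors on compact sets away from the limit cone points upgrades to pointed GH convergence in this non-collapsed flat setting.

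For the endpoint regimes I would argue separately. For $0 = \alpha_0 < \alpha < \alpha_1$, all cone points remain clustered and the limit is the top cone $\mathcal{C}_{\mathbf{v}_1}$ of total angle $2\pi\tilde{\gamma}$ determined by the full index set, with the vertex as base point. For $\alpha > \alpha_\ell$, the section has separated from every $p_i$: if $s \neq p_i$ for all $i$, then after the final rescaling all cone points escape to infinity and the limit is flat $\C$; if $s = p_i$ for some $i$, then the base point sits exactly on that cone point and the local model \eqref{eq:conemodel} gives $\C_{\beta_i}$. The monotonicity $\alpha_{i-1} < \alpha_i$ is immediate from the strictly increasing vanishing orders along the path in $\mathcal{T}'$.

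The main obstacle I anticipate is making the convergence argument uniform and rigorous at the critical exponents $\alpha = \alpha_i$, where the rescaling must be balanced so that several clusters of cone points simultaneously stabilize at finite distinct positions $q_r$ while the already-separated points run off to infinity and the not-yet-separated ones coalesce. Specifically, one must verify that the harmonic conformal factor coming from the distant (escaping) cone points converges, on compact sets of the rescaled plane, to a bounded harmonic function that combines with the local cone factors to reproduce exactly the metric $B_{\mathbf{v}_i}$ with angle $2\pi\tilde{\gamma}$ at infinity; controlling these remote contributions and showing they do not distort the limit geometry is the delicate analytic step. The algebraic bookkeeping via the tree $\mathcal{T}'$ handles the combinatorics cleanly, but translating the leading-order coordinate asymptotics into honest pointed Gromov-Hausdorff convergence, rather than mere pointwise convergence of line elements, is where the real work lies.
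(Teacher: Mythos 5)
Your overall strategy --- centering coordinates at $s(t)$, stratifying the cone points by the vanishing orders $\nu(s-p_i)$, and arguing that at each scale the not-yet-separated points coalesce while the separated ones escape to infinity --- is exactly the paper's approach. However, there is a genuine quantitative error at the heart of your proposal: you claim the critical exponents are $\alpha_i = k_i$, the relative vanishing orders themselves. This is false, and it is false already in the two-point example in the paper: when two points of angles $2\pi\beta_1, 2\pi\beta_2$ separate at coordinate order $\epsilon$, their distance in the metric $g_\epsilon$ is of order $\epsilon^{\beta_1+\beta_2-1}$, not $\epsilon$, so the bubble appears at metric exponent $\gamma = \beta_1+\beta_2-1 \neq 1$. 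The source of the error is visible in your displayed line element: after substituting $\tilde z = (z-s(t))/t^k$ you only extract the factor $|t|^k$ coming from $|dz|$, leaving all the other powers of $|t|$ buried inside the product $\prod_i |t^k\tilde z + s(t)-p_i(t)|^{\beta_i-1}$. Extracting them (factor out $|t|^{d_i(\beta_j-1)}$ from each unseparated point and $|t|^{d(j)(\beta_j-1)}$ from each separated one, as in Equation \eqref{eq:ztilde}) gives the correct, angle-dependent exponent
\[
\alpha_i \;=\; d_i\gamma + \sum_{j \,|\, d(j) < d_i} (1-\beta_j)\,(d_i - d(j)),
\]
which coincides with $d_i$ only in degenerate cases. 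With your identification $\alpha_i = k_i$, the itemized correspondence in the Lemma (bubble at $\alpha = \alpha_i$, cone in between) would simply be wrong: rescaling the metric by $|t|^{-2k_i}$ does not in general produce $B_{\mathbf{v}_i}$.

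A second, related gap: you restrict the coordinate change to integer powers $t^k$, but for $\alpha$ strictly between consecutive critical values the matching coordinate dilation is $|t|^{\lambda}$ with $\lambda$ \emph{non-integer}. The paper handles both issues at once by introducing, for arbitrary $\lambda > 0$, the dilation $\tilde z \mapsto |t|^{\lambda}\tilde z$ and computing the resulting exponent $\alpha(\lambda)$, which is a piecewise-linear continuous convex bijection of $\mathbb{R}_{\geq 0}$ with $\alpha(d_i)=\alpha_i$; inverting it tells you which coordinate scale realizes a given metric scale, and the strict monotonicity $\alpha_1 < \cdots < \alpha_\ell$ then follows from the formula above (Equation \eqref{eq:alphai}) rather than being ``immediate from the vanishing orders'' as you assert. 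Your remaining concerns about the analytic delicacy of the convergence are, by contrast, overstated: once the correct powers of $|t|$ are factored out, the escaping points contribute factors $|1 - t^{d_i}\tilde p_j^{-1}(t)\tilde z|^{\beta_j-1}$ converging uniformly to $1$ on compact sets, the coalescing terms converge to the derivative values $q_r$, and pointed Gromov--Hausdorff convergence follows directly from this explicit convergence of line elements in this non-collapsed flat setting --- no separate harmonic-function estimate is needed.
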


\begin{proof}
Center coordinates at $s(t)$ by letting
\[
\Tilde{z} = z - s(t) . 
\]
In the $\Tilde{z}$ coordinate the position of the cone points is given by 
\[
\Tilde{p}_i(t) = p_i(t) - s(t) .
\]
Write $\Tilde{p}_{j}(t) = a_{j}t^{d(j)} + \text{(h.o.t)} $
with \(a_{j} \neq 0 \) and \(d(j) \geq 1\). We have a finite set of orders of vanishing  \(D \subset \mathbb{N} \) and a partition \( \sqcup_{d \in D} I_d = \{  1, \ldots, N \} \) such that \(d(j) = d \) for all \( j \in I_d\).
Order the elements of \(D\) as \(d_1 < d_2 < \ldots <d_{\ell}\). The interior nodes $\mathbf{v}_1, \ldots, \mathbf{v}_{\ell}$ of $\mathcal{T}$ correspond to the subsets $\{\tilde{p}_j \, | \, d(j) \geq d_1 \}, \ldots, \{\tilde{p}_j \, | \, d(j) \geq d_{\ell} \}$.

If we change coordinates to $\Tilde{z} \mapsto t^{d_i}\Tilde{z}$ then the line elements of $g_t$ transform up to a positive $O(1)$ factor to
\begin{equation}\label{eq:ztilde}
|t|^{\alpha_i} \left( \prod_{j | d(j) \geq d_i} |\Tilde{z} - t^{-d_i}\Tilde{p}_j(t)|^{\beta_j -1} \right)
\left( \prod_{j | d(j) < d_i} |1 - t^{d_i}\Tilde{p}^{-1}_j(t)\Tilde{z}|^{\beta_j -1}\right) |d\Tilde{z}|     
\end{equation}
where
\[
\alpha_i = d_i \left( 1 + \sum_{j | d(j) \geq d_i} (\beta_j-1) \right) + \sum_{j | d(j) < d_i} (\beta_j-1) d_j .
\]
If we let $\gamma \in (0,1)$ be given by $\gamma - 1 = \sum_i (\beta_i -1)$ then
\begin{equation}\label{eq:alphai}
\alpha_i = d_i \gamma + \sum_{j | d(j) < d_i} (1-\beta_j) (d_i - d(j))    
\end{equation}
from which it is clear that $\alpha_1 < \ldots < \alpha_{\ell}$. It follows from Equation \eqref{eq:ztilde} that, up to scale, the metrics $h_{\alpha_i}$ are isometric to the bubbles $B_{\mathbf{v}_i}$. Indeed the terms $t^{-d_i}\Tilde{p}_j(t)$ inside the first parenthesis in Equation \eqref{eq:ztilde} converge to the $d_i$-th derivative of $\Tilde{p}_j$ at the origin 
\[
\lim_{t \to 0} \frac{\Tilde{p}_j(t)}{t^{d_i}} = \frac{\Tilde{p}_j^{(d_i)}(0)}{d_i!}
\]
while the factors $|1-t^{d_i}\Tilde{p}_j^{-1}(t) \Tilde{z}|$ in the second parenthesis in Equation \eqref{eq:ztilde} converge uniformly on compacts subsets of $\C$ to $1$ as $t \to 0$.

More generally, for \( \lambda>0 \) we dilate coordinates by \( \Tilde{z} \mapsto |t|^{\lambda} \Tilde{z} \) so that  \(\sqrt{g_t}\) is up to a positive \(O(1)\) factor
	\begin{equation}\label{eq:tildez2}
		|t|^{\alpha(\lambda)}  \left( \prod_{j | d(j) \geq \lambda}  |\Tilde{z} - |t|^{-\lambda} \Tilde{p}_j(t)|^{\beta_j -1} \right) \left( \prod_{j | d(j) < \lambda}  |1 - |t|^{\lambda} \Tilde{p}_j^{-1}(t) \Tilde{z}|^{\beta_j -1} \right) |d\Tilde{z}| ,
	\end{equation}	
where
 \[
\alpha(\lambda) = \gamma \lambda + \left( \sum_{j | d(j) < \lambda} (1-\beta_j) \right) \lambda + \sum_{j | d(j) < \lambda} (\beta_j-1)d(j) .
 \]
This defines a picewise linear continuous convex bijection \( \alpha (\lambda) : \mathbb{R}_{\geq 0} \to \mathbb{R}_{\geq 0} \) with \( \alpha (d_i) = \alpha_i \). If $\alpha_{i-1} < \alpha < \alpha_i$ then $d_{i-1} < \lambda < d_i$ therefore all the terms $|t|^{-\lambda}\Tilde{p}_j(t)$ inside the first parenthesis in Equation \eqref{eq:tildez2} converge to $0$ as $t \to 0$; and the line elements given by Equation \eqref{eq:tildez2} converge to the cone $\mathcal{C}_{\mathbf{v}_i}$ as $t \to 0$.
\end{proof}

\subsection{Bubbling for flat metrics on the $2$-sphere}

We recall the well-known classification of flat metrics with conical singularities on the $2$-sphere $S^2$ in terms of their cone angles and conformal structures. We follow \cite{thurston, troyanov}.

Let $g$ be a flat metric on $S^2$ with a finite number of cone points $p_1, \ldots, p_N$ of total angle $2\pi\beta_1, \ldots, 2\pi\beta_N$ where $\beta_i \in (0,1)$. In precise terms, $g$ is a smooth flat metric on $S^2 \setminus \{p_1, \ldots, p_N\}$ and in a neighbourhood of each $p_i$ the metric $g$ is equivalent in geodesic polar coordinates centred at $p_i$ to the $2$-cone of total angle $2\pi\beta_i$ given by
\(dr^2 + \beta_i^2r^2d\theta^2 \).
More geometrically, by the Alexandrov embedding theorem \cite[Theorem 37.1]{pak} the metric $g$ is either isometric to the surface of a convex polyhedron in $\R^3$ or the double of a convex polygon in $\R^2$.

By Gauss-Bonnet the cone angles satisfy
\begin{equation}\label{eq:GB}
    \sum_{i=1}^N (1-\beta_i) = 2 .
\end{equation}
Conversely we have the following.

\begin{lemma}\label{lem:fms1}
Let $\beta_1, \ldots, \beta_N \in (0,1)$ such that the Gauss-Bonnet constraint \eqref{eq:GB} is satisfied and let $p_1, \ldots, p_N$ be distinct points in the complex plane $\C$. Then the line element
\begin{equation}\label{eq:flatsphere}
    \left( \prod_i |z - p_i|^{\beta_i-1} \right) |dz|
\end{equation}
extends smoothly over infinity to define a metric $g$ on the $2$-sphere with $N$ cone points of angles $2\pi\beta_1, \ldots, 2\pi\beta_N$. 
\end{lemma}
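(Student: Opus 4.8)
The plan is to handle the finite cone points and the point at infinity separately, in both cases reducing to the local normal-form Claim established in the proof of Lemma \ref{lem:inflatmet}.

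First I would dispose of the behaviour on $\C$, which is essentially identical to Lemma \ref{lem:inflatmet}. The conformal factor $u = \sum_i (\beta_i - 1)\log|z - p_i|$ is harmonic on $\C \setminus \{p_1, \ldots, p_N\}$, so the line element \eqref{eq:flatsphere} is flat away from the $p_i$; and near each $p_i$ it has exactly the form $e^{2u_i}|z - p_i|^{2\beta_i - 2}|dz|^2$ with $u_i$ harmonic and smooth across $p_i$. Since $\beta_i \in (0,1)$ is in particular not a negative integer, the Claim applies and shows that $g$ is isometric near $p_i$ to the cone of total angle $2\pi\beta_i$. This produces the $N$ prescribed cone points.

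The real content is the smooth extension over $\infty$, and this is where the Gauss--Bonnet constraint \eqref{eq:GB} enters decisively. I would pass to the coordinate $\tilde{z} = 1/z$ and expand near $\tilde{z} = 0$. Writing $|z - p_i| = |z|\,|1 - p_i/z|$ and using $\sum_i(\beta_i - 1) = -\sum_i(1-\beta_i) = -2$, the line element \eqref{eq:flatsphere} becomes $|z|^{-2}e^{v}|dz|$, where $v = \mbox{Re}\left(\sum_i (\beta_i-1)\log(1 - p_i\tilde{z})\right)$ is harmonic and extends real-analytically across $\tilde{z} = 0$ with $v(0) = 0$. Since $|dz| = |\tilde{z}|^{-2}|d\tilde{z}|$ and $|z|^{-2} = |\tilde{z}|^2$, the two powers of $|\tilde{z}|$ cancel and the metric takes the form $g = e^{2v}|\tilde{z}|^{2\cdot 1 - 2}|d\tilde{z}|^2$ near $\tilde{z} = 0$. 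This is precisely the normal form of the Claim with exponent $\beta = \sum_i(1-\beta_i) - 1 = 1$.

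The delicate point — and the main thing to get right — is exactly this value $\beta = 1$. The Claim is valid whenever $\beta \notin \{-1, -2, \ldots\}$, and $\beta = 1$ is a positive integer that is not excluded, so the Claim applies and yields a holomorphic coordinate $w = \tilde{z} f$ with $f(0) \neq 0$ in which $g = |w|^{0}|dw|^2 = |dw|^2$; since $\beta = 1$ there is no branch ambiguity in $f^{1/\beta} = f$, so $w$ is a genuine local coordinate and $g$ extends smoothly over $\infty$ as a flat metric with a regular (angle $2\pi$) point there. I expect the only genuine hazard to be the temptation, flagged by the Remark following Lemma \ref{lem:inflatmet}, to think that an integer angle at infinity is automatically problematic: the failure in that Remark corresponds to $\sum_i(1-\beta_i) = 0$, hence to the \emph{excluded} exponent $\beta = -1$, whereas here Gauss--Bonnet forces $\sum_i(1-\beta_i) = 2$ and thus the admissible exponent $\beta = +1$, which is exactly what turns the point at infinity into an honest smooth point of $S^2$ rather than a conical or cylindrical end.
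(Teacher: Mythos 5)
Your proposal is correct and follows essentially the same route as the paper: cone points via the Claim from the proof of Lemma \ref{lem:inflatmet}, and infinity via the substitution $\tilde{z}=1/z$, where Gauss--Bonnet makes the powers of $|\tilde{z}|$ cancel to leave the line element $\bigl(\prod_i |1-p_i\tilde{z}|^{\beta_i-1}\bigr)|d\tilde{z}|$. The only (harmless) difference is that the paper stops there, since this expression is already manifestly smooth at $\tilde{z}=0$, so your additional pass through the Claim with $\beta=1$ is correct but not needed.
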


\begin{proof}
An easy check shows that if we let $z=1/w$ then the line element \eqref{eq:flatsphere} takes the form
\[
\left( \prod_i |1 - p_i w|^{\beta_i-1} \right) |dw|
\]
which extends smoothly over $w=0$. On the other hand, the proof of Lemma \ref{lem:inflatmet} shows that close to every $p_i$ we can find a holomorphic coordinate $\xi$ in which the metric $g$ agrees with the flat $2$-cone $|\xi|^{\beta_i-1}|d\xi|$.
\end{proof}

\begin{lemma}\label{lem:fms2}
Every flat metric $g$ on the $2$-sphere with $N$ cone point of total angles $2\pi\beta_1, \ldots, 2\pi\beta_N$ is isometric to a metric of the form given by Equation \eqref{eq:flatsphere}.
\end{lemma}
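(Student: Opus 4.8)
The plan is to exploit the conformal geometry and reduce everything to a Liouville-type statement. First I would observe that a flat metric with isolated conical singularities induces a conformal structure on $S^2$ that extends smoothly across each cone point: near $p_i$ the metric is isometric to the model cone $|\xi|^{\beta_i-1}|d\xi|$, and in the coordinate $\xi$ (which is conformal) the cone point is merely the origin of a disc, hence a regular point of the conformal structure. Since every conformal structure on $S^2$ is that of the Riemann sphere, the uniformization theorem provides a global holomorphic coordinate; choosing the point at infinity to be one of the infinitely many smooth points, I obtain a complex coordinate $z$ on $\C = S^2 \setminus \{\infty\}$ in which $g = e^{2u}|dz|^2$, with $u$ harmonic on $\C \setminus \{p_1, \ldots, p_N\}$ because $g$ is flat there (exactly as in the proof of Lemma \ref{lem:inflatmet}).

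Next I would pin down the singularities of $u$. Near each $p_i$ the isometry to the standard cone is, after fixing orientations, a biholomorphism $\xi = \phi(z)$ with $\phi(p_i)=0$ and $\phi'(p_i)\neq 0$, so that $e^{2u} = |\phi(z)|^{2\beta_i-2}|\phi'(z)|^2$ and hence
\[
u - (\beta_i-1)\log|z-p_i| = (\beta_i-1)\log\left|\frac{\phi(z)}{z-p_i}\right| + \log|\phi'(z)|
\]
extends harmonically across $p_i$. Therefore $v = u - \sum_i (\beta_i-1)\log|z-p_i|$ is harmonic on all of $\C$. It remains to control $v$ at infinity. Since $\infty$ is a smooth point, in the coordinate $w=1/z$ the conformal factor $\tilde u = u + 2\log|z|$ of $g$ is smooth, in particular bounded, near infinity; on the other hand the Gauss--Bonnet identity \eqref{eq:GB}, namely $\sum_i(1-\beta_i)=2$, gives $\sum_i(\beta_i-1)\log|z-p_i| = -2\log|z| + o(1)$ as $z\to\infty$. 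Subtracting, $v$ is bounded near infinity, hence bounded and harmonic on all of $\C$, so Liouville's theorem forces $v \equiv c$ for a constant $c$.

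Finally I would remove the constant. With $v \equiv c$ we get $u = c + \sum_i(\beta_i-1)\log|z-p_i|$, so the line element $e^u|dz|$ of $g$ equals $e^c$ times \eqref{eq:flatsphere}. A scaling $z = \lambda\zeta$ multiplies the line element \eqref{eq:flatsphere} by $\lambda^{1+\sum_i(\beta_i-1)} = \lambda^{-1}$ while moving the cone points to $p_i/\lambda$, so the choice $\lambda = e^{c}$ realizes $g$, via a biholomorphism of $\CP^1$, as a metric exactly of the form \eqref{eq:flatsphere}, which is the desired isometry. I expect the main obstacle to be the first paragraph: justifying cleanly that the conical metric induces a genuine complex structure on $S^2$ (conformal removability of the cone points) together with the holomorphicity of the local isometries to the model cones. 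Once $g$ is written as $e^{2u}|dz|^2$ with the correct logarithmic data at the $p_i$ and the matching growth at infinity forced by \eqref{eq:GB}, the remainder is a short application of Liouville's theorem.
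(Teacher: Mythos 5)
Your proposal is correct and follows essentially the same route as the paper: extend the conformal structure smoothly across the cone points, uniformize to $\CP^1$ with $\infty$ a smooth point of $g$, split off the logarithmic singular part $\sum_i(\beta_i-1)\log|z-p_i|$ of the conformal factor, and conclude that the harmonic remainder is constant --- the paper applies the maximum principle to $u$ on the compact $\CP^1$ where you apply Liouville's theorem on $\C$ after bounding $v$ at infinity via Gauss--Bonnet, an immaterial difference. Your final rescaling $z=\lambda\zeta$ with $\lambda=e^c$, which absorbs the constant using $\sum_i(1-\beta_i)=2$, is a small refinement that the paper leaves implicit, since its proof ends with $u$ constant, i.e.\ a constant multiple of the form \eqref{eq:flatsphere}.
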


\begin{proof}
The metric $g$ induces in the usual way a complex structure away from the cone points given by an anti-clockwise rotation of $90^o$. This complex structure extends smoothly over the cone points as can be checked for the $2$-cone. Indeed, if $\xi = r^{1/\beta} e^{i\theta}$ then $dr^2 + \beta^2r^2d\theta^2 = \beta^2 |\xi|^{2\beta-2}|d\xi|^2$ showing that the $2$-cone endows the Euclidean plane $\R^2$ with the complex structure of $\C$. In particular, the metric $g$ endows the $2$-sphere with the structure of a Riemann surface. By the Uniformization Theorem we know that this Riemann surface is biholomorphic to $\CP^1$. Let $z$ be a complex coordinate on $\C \subset \CP^1$ such that $\infty$ is a smooth point of $g$ and let $p_1, \ldots, p_N \in \C$ be the cone points. Then we can write
\[
g = e^{2u} \left( \prod_i |z - p_i|^{2\beta_i-2} \right) |dz|^2
\]
for a real function $u$. Since $g$ is flat then $u$ is harmonic away from the cone points. The fact that the cone angle at $p_i$ is $2\pi\beta_i$ implies that $u$ extends smoothly over $p_i$. As a consequence, the function $u$ is smooth and harmonic on the whole $\CP^1$ and by the maximum principle it is constant.
\end{proof}

Now, let $\mathcal{M}_{0,N}$ be the moduli space of configurations of $N$ marked points in the Riemann sphere $\CP^1$ modulo the action of linear transformations $PSL(2, \C)$ that preserve the markings. Let $Met_{\beta}$ be the space of all flat metrics on the $2$-sphere with $N$ cone points of angles $2\pi\beta_1, \ldots, 2\pi\beta_N$ modulo marked isometries and scale.
 
 \begin{corollary}\label{lem:forget}
 The forgetful map
 \begin{equation}\label{eq:forget}
 Met_{\beta} \to \mathcal{M}_{0,N}
 \end{equation}
 that records the conformal structure induced by the metric is a bijection.
 \end{corollary}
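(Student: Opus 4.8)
The plan is to exhibit an explicit inverse to the forgetful map, using Lemmas \ref{lem:fms1} and \ref{lem:fms2} as the two halves of a bijection argument. A configuration of $N$ marked points in $\CP^1$ modulo $PSL(2,\C)$ has a representative in which all marked points lie in $\C \subset \CP^1$ (we may always arrange, by a Möbius transformation, that $\infty$ is not a marked point). Given such representatives $p_1, \ldots, p_N \in \C$, Lemma \ref{lem:fms1} produces a flat conical metric of the form \eqref{eq:flatsphere} realizing the prescribed angles $2\pi\beta_i$. This assignment is the candidate inverse; the content of the corollary is that it is well-defined on $\mathcal{M}_{0,N}$ and is a two-sided inverse to \eqref{eq:forget}.

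\emph{Surjectivity} of the forgetful map is exactly Lemma \ref{lem:fms1}: every point of $\mathcal{M}_{0,N}$ is represented by points $p_1, \ldots, p_N \in \C$, and the metric \eqref{eq:flatsphere} built from them is a flat metric with the correct cone angles whose induced conformal structure is, tautologically, the standard one on $\CP^1$ with marked points at the $p_i$. Hence the class of that metric in $Met_\beta$ maps to the given class in $\mathcal{M}_{0,N}$. \emph{Injectivity} is where Lemma \ref{lem:fms2} enters: if two metrics $g, g'$ induce the same marked conformal structure, then after applying the biholomorphism $\CP^1 \to \CP^1$ carrying one configuration to the other (a $PSL(2,\C)$ transformation, which is the equivalence we have quotiented by), Lemma \ref{lem:fms2} writes both as $e^{2u}(\prod_i |z-p_i|^{2\beta_i-2})|dz|^2$ with $u$ harmonic on all of $\CP^1$; the maximum principle forces $u$ to be constant, so $g$ and $g'$ agree up to scale. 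Thus they define the same class in $Met_\beta$.

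The one point that needs genuine care, rather than routine verification, is \textbf{checking that the candidate inverse is well-defined}, i.e.\ independent of the chosen representative of the configuration in $\mathcal{M}_{0,N}$. Two representatives $(p_1,\ldots,p_N)$ and $(p_1',\ldots,p_N')$ with $\infty$ unmarked differ by a Möbius transformation $\phi \in PSL(2,\C)$ fixing the marking structure; one must confirm that the pullback under $\phi$ of the metric \eqref{eq:flatsphere} built from the $p_i'$ agrees, up to the isometry $\phi$ and a scaling constant, with the metric built from the $p_i$. This reduces to the conformal behaviour computation already performed in the proof of Lemma \ref{lem:fms1} (the change $z = 1/w$ there is precisely the model case of accounting for how $\infty$ is treated), together with the fact that we quotient $Met_\beta$ by both isometry and scale. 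I expect this compatibility under the $PSL(2,\C)$-action to be the main — though ultimately mild — obstacle.

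Once well-definedness is settled, injectivity and surjectivity follow immediately from the two lemmas as above, and the map \eqref{eq:forget} is a bijection, completing the proof of Corollary \ref{lem:forget}.
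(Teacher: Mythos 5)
Your proposal is correct and follows essentially the same route as the paper, whose proof is simply that surjectivity follows from Lemma \ref{lem:fms1} and injectivity from Lemma \ref{lem:fms2}. Your additional verification of well-definedness under the $PSL(2,\C)$-action is a sound (if mild) point of care that the paper leaves implicit.
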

 
\begin{proof}
Surjectivity follows from Lemma \ref{lem:fms1} and injectivity  from Lemma \ref{lem:fms2}.
\end{proof}

Consider a sequence of unit area flat conical metrics $g_t$ on $\CP^1$ parameterized by $t$ in the unit disc $\Delta$. 
We assume that the positions of the cone points $p_1(t), \ldots, p_N(t)$ depend holomorphically on $t \in \Delta$ and that for $t \neq 0$ we have $N$ different cone points ($p_i(t) \neq p_j(t)$ for $i \neq j$) of fixed cone angles $2\pi\beta_1, \ldots, 2\pi\beta_N$. However, for $t=0$ we allow some of the cone points come together. More precisely, we have a partition of the index set
\[
\bigcup_{k=1}^M I_k = \{1, \ldots, N\}
\]
by disjoint subsets $I_k$ such that $p_i(0) = p_j(0)$ if $i, j$ belong to the same subset $I_k$ and $p_i(0) \neq p_j(0)$ if $i \in I_k$, $j \in I_{\ell}$ with $k \neq \ell$. Moreover, we assume that for every $k=1, \ldots, M$ we have
\begin{equation}\label{eq:collisions}
    \sum_{i \in I_k} (1-\beta_i) < 1 .
\end{equation}
Let $q_1, \ldots, q_M$ be the values of $p_1(0), \ldots, p_N(0)$ so that the limiting metric is 
\begin{equation}\label{eq:g0}
    g_0 = C_0 \left( \prod_{j=1}^M |z - q_j|^{2\gamma_j - 2} \right) |dz|^2
\end{equation}
where the angles $\gamma_j$ are determined by $1 - \gamma_j = \sum_{i \in I_j}(1-\beta_i)$ and $C_0>0$ is determined by unit area normalization.

\begin{theorem}\label{thm:flatP1}
Let $\sigma$ be a section of the projection map $(t, z) \mapsto t$ from $\Delta \times \CP^1$ to $\Delta$ given by $t \mapsto \sigma(t) = (t, s(t))$ where $s$ is a holomorphic function with $s(0) = q_j$ for some $1 \leq j \leq M$. Then the rescaled pointed Gromov-Hausdorff limits
\[
h_{\alpha} = \lim_{t \to 0} \left(\CP^1, |t|^{-2\alpha} \cdot g_t, s(t) \right)
\]
for $\alpha>0$ are given as in Lemma \ref{lem:rescaledsection} with $\mathcal{T} = \mathcal{T}(S)$ where $S = \{p_i \, | \, i \in I_j\}$.
\end{theorem}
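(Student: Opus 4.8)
The plan is to localize the whole problem near the collision point $q_j$ and reduce it to the purely local computation already carried out in Lemma \ref{lem:rescaledsection} for infinite flat metrics. By Lemma \ref{lem:fms2} I would first write
\[
g_t = C_t \left( \prod_{i \in I_j} |z - p_i(t)|^{2\beta_i - 2} \right)\left( \prod_{i \notin I_j} |z - p_i(t)|^{2\beta_i - 2} \right) |dz|^2 ,
\]
where $C_t > 0$ is the unit-area normalization constant. The essential idea is that, on coordinate neighbourhoods of $q_j$ shrinking to $\{q_j\}$, the \emph{far} factors indexed by $i \notin I_j$ together with $C_t$ contribute only an overall positive constant, so that $g_t$ is there asymptotically a fixed multiple of the \emph{infinite} flat metric $\prod_{i \in I_j} |z - p_i(t)|^{\beta_i - 1}|dz|$ governed by the cluster $I_j$ and the tree $\mathcal{T}(S)$ with $S = \{p_i : i \in I_j\}$.

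First I would establish that $C_t \to C_0$. Since each exponent satisfies $2\beta_i - 2 \in (-2, 0)$, ordering the distances gives the pointwise bound $\prod_{i \in I_j} |z - p_i(t)|^{2\beta_i - 2} \leq \big( \min_{i \in I_j} |z - p_i(t)| \big)^{2\gamma_j - 2} \leq \sum_{i \in I_j} |z - p_i(t)|^{2\gamma_j - 2}$, using $\sum_{i \in I_j}(2\beta_i - 2) = 2\gamma_j - 2$ and $2\gamma_j - 2 < 0$. The right-hand side is integrable uniformly in $t$ because $2\gamma_j - 2 > -2$ by the collision hypothesis \eqref{eq:collisions}, and the bounded far factors cause no trouble; applying this near each cluster, together with pointwise convergence away from the $q_j$ and the uniform decay $\prod_i|z-p_i(t)|^{2\beta_i-2}\sim|z|^{-4}$ at infinity, dominated convergence gives convergence of the areas of $\prod_i |z - p_i(t)|^{2\beta_i - 2}|dz|^2$ to that of $g_0/C_0$, whence $C_t \to C_0 > 0$.

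Next I would localize. As $t \to 0$ the points $p_i(t)$ with $i \notin I_j$ converge to the distinct limits $q_{k(i)} \neq q_j$, so the far factor converges uniformly on any shrinking neighbourhood of $q_j$ to the positive constant $K_j = \prod_{i \notin I_j} |q_j - q_{k(i)}|^{2\beta_i - 2}$. Centering at the section by $\tz = z - s(t)$ and dilating $\tz \mapsto |t|^{\lambda} \tz$ exactly as in the proof of Lemma \ref{lem:rescaledsection}, the rescaled metric $|t|^{-2\alpha} g_t$ becomes, on compact subsets of the $\tz$-coordinate, the product of the converging scalar $C_t \cdot (\text{far factor}) \to C_0 K_j$ and the infinite flat model for the cluster $I_j$. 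Because Lemma \ref{lem:rescaledsection} is stated up to scale, the constant $C_0 K_j$ is immaterial, and the cones $\mathcal{C}_{\mathbf{v}_i}$ and bubbles $B_{\mathbf{v}_i}$ extracted from $\mathcal{T}(S)$ are precisely those claimed in the theorem.

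The step I expect to be the main obstacle is promoting this convergence of line elements on compact coordinate sets to genuine pointed Gromov–Hausdorff convergence of $(\CP^1, |t|^{-2\alpha}g_t, s(t))$. The key observation is scale separation: for fixed $R > 0$ the ball $B_{|t|^{-2\alpha}g_t}(s(t), R)$ is exactly the $g_t$-ball of radius $R|t|^{\alpha} \to 0$, which for small $t$ is contained in an arbitrarily small coordinate neighbourhood of $q_j$ (here one uses $g_t \to g_0$ and that $q_j$ sits at positive $g_0$-distance from the other $q_k$ and from $\infty$). On such neighbourhoods the uniform approximation $g_t \approx C_0 K_j \prod_{i \in I_j}|z - p_i(t)|^{2\beta_i-2}|dz|^2$ of the previous step applies, so these balls converge to the corresponding balls in the bubble and no geometry from the other clusters or from the point at infinity can leak into the limit. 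With this localization in hand, the statement follows verbatim from Lemma \ref{lem:rescaledsection}.
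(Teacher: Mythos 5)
Your proposal is correct and takes essentially the same route as the paper: write $g_t$ in the explicit normalized form \eqref{eq:gts}, control the normalization constants $C_t$ and the contribution of the cone points outside the cluster $I_j$, and then reduce to the local computation of Lemma \ref{lem:rescaledsection}. The only difference is one of detail, not of method: the paper dismisses the normalization bound and the localization as ``easy to check,'' while you supply them explicitly (the uniform integrability estimate using $2\gamma_j - 2 > -2$ from \eqref{eq:collisions}, the uniform convergence of the far factors to a constant, and the scale-separation argument showing the rescaled balls $B_{|t|^{-2\alpha}g_t}(s(t),R)$ shrink into a coordinate neighbourhood of $q_j$).
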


\begin{proof}
The metrics $g_t$ are given for $t \neq 0$ by the explicit formula
\begin{equation}\label{eq:gts}
g_t = C_t \left( \prod_{i=1}^N |z - p_i(t)|^{2\beta_i-2} \right) |dz|^2     
\end{equation}
where $C_t > 0$ are determined by the condition $\text{Area}(g_t) = 1$. At $t=0$ the limiting metric $g_0$ is given by Equation \eqref{eq:g0}. It is easy to check that the distance functions induced by the metrics $g_t$ converge uniformly to the distance function given by $g_0$ and that there is a uniform $C>0$ such that $C^{-1} < C_t < C$ for all $t$ with $|t|< 1/2$. Given this, the same proof of Lemma \ref{lem:rescaledsection} applies to this case.
\end{proof}

\begin{remark}
    Using gluing methods we expect Theorem \ref{thm:flatP1} to hold for non-collapsed sequences of hyperbolic and spherical metrics. This goes  by producing approximate solutions, using the infinite flat model families, and then perturbing the approximation by analyzing the linearization of the singular Liouville equation. See \cite{mazzeo}.
\end{remark}

\subsection{Relations to Deligne-Mostow and Deligne-Mumford compactifications}
The space $\mathcal{M}_{0,N}$ of configurations of $N \geq 3$ marked points in the Riemann sphere is an open complex manifold of dimension $N-3$. The automorphisms group of the projective line $PSL(2, \C)$ acts diagonally on the Cartesian product $(\CP^1)^N$.
This action is free away from diagonals. If we let
\[
U = (\CP^1)^N \setminus \bigcup_{i \neq j} \{z_i=z_j\}
\]
then $\mathcal{M}_{0, N}$ is the space of $PSL(2, \C)$-orbits on the open set $U$.

The \emph{Deligne-Mostow compactification} $\overline{\mathcal{M}^{\beta}_{0, N}}$ of the configuration space $\mathcal{M}_{0,N}$ is defined as follows.
Fix $0<\beta_i < 1$ for $i = 1, \ldots, N$ such that the Gauss-Bonnet constraint
\[
\sum_{i=1}^N (1-\beta_i) = 2
\]
is satisfied. Moreover, we assume the generic condition that\footnote{In terms of metrics Equation \eqref{eq:noncollapse} rules out collapsing sequences.} 
\begin{equation}\label{eq:noncollapse}
    \sum_{i \in I} (1-\beta_i) \neq 1  \, \text{ for all subsets } I \subset \{1, \ldots, N\} .
\end{equation}

Let $z = (z_1, \ldots, z_N)$ be a point in $(\CP^1)^N$. The point $z$ defines a partition of the index set $\{1, \ldots, N\}$ by subsets $I_1, \ldots, I_{\ell}$ such that $z_i=z_j$ if and only if both $i, j$ belong to the same subset $I_k$. We say that $z$ is $\beta$-\emph{stable} if
\begin{equation}\label{eq:stability}
    \sum_{i \in I_j} (1-\beta_i) < 1 \, \text{ for all } 1 \leq j \leq \ell .
\end{equation}

In particular, if $z \in U$ then the subsets $I_j$ are singletons
and the stability condition \eqref{eq:stability} trivially holds. However, for arbitrary tuples in $(\CP^1)^N$ the notion of stability depends on the angle parameters $\beta = (\beta_1, \ldots, \beta_N)$. The upshot is that the space of $PSL(2, \C)$-orbits of $\beta$-stable points is a projective manifold $\overline{\mathcal{M}}^{\beta}_{0,N}$ that contains $\mathcal{M}_{0,N}$ as a Zariski open subset, see \cite[Section 4]{DM}. Under the correspondence between $\mathcal{M}_{0, N}$ and the space of unit area flat metrics on the $2$-sphere with prescribed cone angles $Met_{\beta}$ given by Corollary \ref{lem:forget}, the projective manifold $\overline{\mathcal{M}}^{\beta}_{0,N}$ corresponds to the Gromov-Hausdorff compactification of $Met_{\beta}$.

\begin{example}\label{ex:delmost}
Take $N=5$ and suppose that the first $4$ cone points are equal $\beta_1 = \ldots = \beta_4 = \beta$ while $\beta_5$ is determined by Gauss-Bonnet
\[
\beta_5 = 3 - 4\beta .
\]
The requirement that $0< \beta_5 < 1$ implies that $1/2 < \beta < 3/4$. For $2/3 < \beta < 3/4$ the space $\overline{\mathcal{M}}^{\beta}_{0,5}$ is equivalent to the projective plane $\CP^2$ and the boundary divisor (i.e. the complement of $\mathcal{M}_{0, 5}$) is the arrangement of $6 = \binom{4}{2}$ lines that join $2$ out of $4$ points in general position. When $1/2 < \beta < 2/3$ the space $\overline{\mathcal{M}}^{\beta}_{0,5}$ is equivalent to the blow-up $\text{Bl}_4\CP^2$ of the projective plane at the $4$ triple points of the arrangement. The boundary divisor is made of $10 = \binom{5}{2}$ rational curves that intersect in a normal crossing configuration. Equivalently, $\mathcal{M}_{0,5}$ is the complement in $\CP^1 \times \CP^1$ of the $7$ lines: $6$ where one of the coordinates is $0, 1, \infty$ plus the diagonal, and  $\overline{\mathcal{M}}^{\beta}_{0,5}$ for $1/2<\beta<3/4$ is obtained by blowing up the points $(0,0), (1,1), (\infty, \infty) \in \CP^1 \times \CP^1$.
\end{example}

\begin{remark}
As Example \ref{ex:delmost} shows, the boundary divisor $\overline{\mathcal{M}}^{\beta}_{0,N} \setminus \mathcal{M}_{0,N}$ is not always normal crossing.    
\end{remark}

The \emph{Deligne-Mumford} compactification $\overline{\mathcal{M}}_{0,N}$ is a projective manifold that contains $\mathcal{M}_{0,N}$ as a Zariski open subset and such that its complement $\overline{\mathcal{M}}_{0,N} \setminus \mathcal{M}_{0,N}$ is a normal crossing divisor
\[
D = \sum_P D_P
\]
whose irreducible components $D_P$ are in one to one correspondence with partitions $P=\{I_0, I_1\}$ of in the index set $\{1, \ldots, N\}$ into $2$ subsets $I_0, I_1$ such that $\min \{|I_0|, |I_1|\} \geq 2$.

The boundary points $m \in \overline{\mathcal{M}}_{0,N}$  represent connected nodal curves $C_m$ with $N$ marked points whose irreducible components are $\CP^1$'s. The total number of nodal and marked points in each irreducible component is at least $3$. The topology of $C_m$ is encoded by a tree whose vertices represent the irreducible components of $C_m$ and and an edge connects $2$ vertices if the corresponding $\CP^1$'s intersect at a nodal point. The number of divisors that contain $m$ is equal to the number of nodes of $C_m$. More precisely, splitting a node $y \in C_m$ into $2$ divides the curve $C_m$ into $2$ connected components $C_0, C_1$ and then we have a partition $P = \{I_0, I_1\}$ by recording which indices correspond to marked points in each component.

We proceed to relate the Deligne-Mostow and Deligne-Mumford compactifications.
Same as before, fix $0< \beta_i < 1$ for $i=1, \ldots, N$ such that the Gauss-Bonnet constraint $\sum_i (1-\beta_i) =2$ is satisfied. 

\begin{lemma}[\cite{koziarz}]
    There is a logarithmic resolution
    \begin{equation}
    \pi: \overline{\mathcal{M}}_{0,N} \to \overline{\mathcal{M}}^{\beta}_{0,N} .
\end{equation}
\end{lemma}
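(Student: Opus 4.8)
The plan is to realise $\pi$ as a modular reduction (contraction) morphism and then to read off the log-resolution property from the combinatorics of the boundary. Put $a_i = 1-\beta_i \in (0,1)$. The Gauss-Bonnet identity \eqref{eq:GB} becomes $\sum_i a_i = 2$ and the Deligne-Mostow stability condition \eqref{eq:stability} says that every cluster of coincident points has total weight strictly less than $1$; this is exactly the Hilbert-Mumford (semi)stability condition for the $SL(2,\C)$-action on $(\CP^1)^N$ linearised by $\mathcal{O}(a_1,\dots,a_N)$. The genericity hypothesis \eqref{eq:noncollapse} forces semistability to coincide with stability, so $\overline{\mathcal{M}}^{\beta}_{0,N}$ is the geometric GIT quotient $(\CP^1)^N /\!\!/ SL(2,\C)$, smooth and projective, and its objects are single copies of $\CP^1$ carrying a $\beta$-stable marking (no component ever bubbles off, since the total weight is only $2$).

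First I would describe $\pi$ on closed points. A point of $\overline{\mathcal{M}}_{0,N}$ is a stable $N$-pointed tree of projective lines $(C,x_1,\dots,x_N)$. Because $\sum_i a_i = 2$, at every node the two branches carry weights summing to $2$, so by \eqref{eq:noncollapse} exactly one branch is light (weight $<1$); following the heavy branches from any component leads to a unique distinguished component $E_0$. Sending $(C,x_1,\dots,x_N)$ to $E_0 \cong \CP^1$ with the marked points of each light branch collapsed to the node through which that branch meets $E_0$ produces a $\beta$-stable configuration, hence a point of $\overline{\mathcal{M}}^{\beta}_{0,N}$. Over $\mathcal{M}_{0,N}$ the curve $C$ is irreducible and nothing is collapsed, so by Corollary \ref{lem:forget} the map restricts to the identity there; in particular $\pi$ is birational.

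To promote this recipe to an algebraic morphism I would work over the universal stable curve $\rho\colon \mathcal{C}\to\overline{\mathcal{M}}_{0,N}$: the fibrewise contraction above is effected by a relatively semiample $\mathbb{Q}$-line bundle that is trivial on the light components and positive on the distinguished one, producing a stabilised family $\mathcal{C}\to\mathcal{C}'$ of $\beta$-stable configurations over the base, and the universal property of the GIT quotient then yields the morphism $\pi$. Equivalently, one appeals to the reduction morphism from $\overline{\mathcal{M}}_{0,N}$ to the GIT quotient $(\CP^1)^N /\!\!/ SL(2,\C)$. Properness of $\pi$ is automatic since both spaces are projective. Finally, the space $\overline{\mathcal{M}}_{0,N}$ is smooth with simple normal crossing boundary $D$ whose components $D_P$ are indexed by partitions $P=\{I_0,I_1\}$; since $\pi$ is an isomorphism over $\mathcal{M}_{0,N}$ one has $\pi^{-1}(\partial\overline{\mathcal{M}}^{\beta}_{0,N}) = D$, and together with smoothness of the source this says precisely that $\pi$ is a log resolution of the (possibly non normal crossing) boundary pair. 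Labelling the light side of $P$ by $I_0$, the divisor $D_P$ maps onto the locus where the points of $I_0$ coincide, of codimension $|I_0|-1$, so it maps isomorphically onto a boundary divisor when $|I_0|=2$ and is contracted with $(|I_0|-2)$-dimensional fibres when $|I_0|\ge 3$; this reproduces Example \ref{ex:delmost}, where the four exceptional curves over the triple points are contracted and the six remaining boundary curves map to the six lines.

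The main obstacle is upgrading the set-theoretic contraction to a genuine morphism rather than merely a rational map, i.e.\ excluding indeterminacy in codimension $\ge 2$. The cleanest route is the universal property of the GIT quotient, but this forces one to confront the fact that the weights sum to exactly $2$, which sits on the wall of the admissible range for weighted stable curves; it is precisely the genericity condition \eqref{eq:noncollapse} that keeps all semistable configurations stable, guarantees that the distinguished component $E_0$ and the contraction are unambiguous, and makes $\overline{\mathcal{M}}^{\beta}_{0,N}$ a smooth geometric quotient. Once the morphism is in hand, birationality, properness and the normal crossing bookkeeping are formal.
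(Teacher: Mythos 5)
Your proof is correct and is essentially the paper's: the distinguished component $E_0$ obtained by following heavy branches is exactly the $\beta$-principal component of \cite[Lemma 5.1]{koziarz} (the dichotomy $b(y)<1$ or $b(y)>1$ at each split node being forced by \eqref{eq:noncollapse}), and collapsing each light branch to its attaching node is precisely the paper's assignment $z_i = y$ for $x_i \in Y'$. Your added GIT framing and the upgrade of the pointwise recipe to an algebraic morphism supply details the paper delegates to \cite{koziarz}; the one step to handle with care is the contraction bundle, since the obvious candidate $\omega_\rho\bigl(\sum_i a_i x_i\bigr)$ has negative degree $\sum_{i \in I_0} a_i - 1 < 0$ on a light tail marked by $I_0$ rather than degree zero, so the relatively semiample $\mathbb{Q}$-bundle you posit must be chosen less naively --- the alternative you already mention, invoking the known reduction morphism from $\overline{\mathcal{M}}_{0,N}$ to the GIT quotient, is the safer route.
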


\begin{proof}
    We follow \cite[Section 5]{koziarz}. For simplicity of notation, we write $b(x_i) = 1-\beta_i$ for $i=1, \ldots, N$ and refer to it as the weight at $x_i$. The Gauss-Bonnet formula is equivalent to $\sum_i b(x_i) = 2$.
    
    Let $m \in \overline{\mathcal{M}}_{0,N}$ and let $C_m$ be the corresponding nodal curve with marked points $x_1, \ldots, x_N$. Split each node of $C_m$ into a pair of points $\{y, y'\}$ and let $Y, Y'$ be the $2$ connected components of $C_m$ that contain $y, y'$ respectively. Define
    \begin{equation}\label{eq:yweight}
    b(y) = \sum_{x_i \in Y'} b(x_i) .    
    \end{equation}
    In particular $b(y) + b(y') = 2$ and exactly one of the alternatives $b(y) < 1, b(y') > 1$ or $b(y)> 1, b(y') < 1$ must hold.
    
    Given an irreducible component $C_j$ of $C_m$ we have a finite set of marked points
    \[
    \Sigma_j = \{x_i \, | \, x_i \in C_j\} \bigcup \{y_i \, | \, y_i \in C_j\} 
    \]
    and the sum of weights over all points in $\Sigma_j$ is equal to $2$. The key elementary fact \cite[Lemma 5.1]{koziarz} is that there is a unique irreducible component $C_j$ such that $b(z) < 1$ for all $z \in \Sigma_j$, called the $\beta$-\emph{principal} component of $C_m$.

    The $\beta$-principal component $C_j$ gives us an $N$-tuple $(z_1, \ldots, z_N)$ where $z_i = x_i$ if $x_i \in C_j$ and $z_i = y$ if $x_i \in Y'$ where $y$ is a nodal point in $C_j$. By definition, the $N$-tuple $(z_1, \ldots, z_N)$ is $\beta$-stable and the resolution is given by the map
    \[
    \pi(m) = [(z_1, \ldots, z_N)]  . \qedhere
    \]
\end{proof}

\begin{example}
    Take $N=5$ and $\beta_1 = \ldots = \beta_4 = \beta$ with $1/2 < \beta < 3/4$ as in Example \ref{ex:delmost}. Let $\pi: \overline{\mathcal{M}}_{0,5} \to \overline{\mathcal{M}}_{0,5}^{\beta}$. If $1/2 < \beta < 2/3$ then $\pi$ is an isomorphism.
    If $2/3 < \beta < 3/4$ then $\pi$ is the blow-up at the $4$ triple points of the boundary divisor.
\end{example}

We can now provide a metric interpretation, in  relation to bubbling, of the boundary points in the Deligne-Mumford compactification.


\begin{theorem}\label{thm:DelMum}
Every point in the Deligne-Mumford compactification represents a bubble tree for a family of flat metrics with cone points on $\mathbb{CP}^1$ as in Theorem \ref{thm:flatP1}. Conversely, every such family of metrics determines a unique point in the Deligne-Mumford compactification $ \overline{\mathcal{M}}_{0,N}$
\end{theorem}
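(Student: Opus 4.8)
The plan is to build an explicit dictionary between the data of a bubble tree and that of a stable $N$-pointed rational curve, and then to check that the two operations in Theorem~\ref{thm:flatP1} --- forming the bubbles of a family, and realizing a family from prescribed collisions --- are mutually inverse under this dictionary; throughout I assume the non-collapsing genericity~\eqref{eq:noncollapse}, under which the Koziarz $\beta$-principal component is unique. Given a point $m \in \overline{\mathcal{M}}_{0,N}$, i.e. a connected genus-zero nodal curve $C_m$ with $N$ marked points, I assign to each marking $x_i$ the weight $b(x_i) = 1-\beta_i$ and to each node the weight of Equation~\eqref{eq:yweight}, so that the weights on every irreducible component sum to $2$. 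I would then read off the bubble tree as follows: the principal component carries the limiting metric $g_0$ of Equation~\eqref{eq:g0} with cone points at the collision values $q_j$; every other component $C_j$ carries a bubble, where the unique heavy node ($b(y)>1$, pointing toward the principal component) becomes the cone end at infinity of angle $2\pi\gamma_{\mathbf v}$ and the light points ($b<1$) become the finite cone points of the infinite flat metric of Lemma~\ref{lem:inflatmet}. The consistency check is $\sum_{\text{light}} b = 2-b(y)$, so that $\gamma_{\mathbf v} = b(y)-1 \in (0,1)$, the hypothesis $\sum_{\text{light}}(1-\beta) < 1$ of Lemma~\ref{lem:inflatmet} holds, and the angle at infinity matches; the rooted dual tree of $C_m$ is then exactly the bubble tree.

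For the forward direction I would realize a given $C_m$ by an explicit family. One chooses holomorphic functions $p_i(t)$ whose relative orders of vanishing reproduce the dual tree of $C_m$: the depth at which two functions $p_i, p_j$ separate is dictated by the distance from the root to the lowest common ancestor of the leaves $x_i, x_j$ in that tree. By construction the Ghys grouping tree $\mathcal{T}(S_j)$ of each cluster $S_j = \{p_i \mid i \in I_j\}$ coincides with the corresponding subtree, and Theorem~\ref{thm:flatP1} together with Lemma~\ref{lem:rescaledsection} then exhibits exactly the prescribed bubbles as the rescaled limits $h_{\alpha}$ along sections through the relevant vertices.

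For the converse I would run the assembly in reverse: starting from a family as in Theorem~\ref{thm:flatP1}, take the principal component to be $(\CP^1, g_0)$ with special points $q_1, \ldots, q_M$, and recursively glue a copy of $\CP^1$ carrying the bubble $B_{\mathbf v}$ at each interior node of each $\mathcal{T}(S_j)$, attached at the point fixed by the relevant leading jet appearing in Equation~\eqref{eq:ztilde}. The result is a connected genus-zero nodal curve, and stability is automatic: each bubble has the $\ell \geq 2$ finite cone points coming from its children together with the end at infinity, giving at least three special points, while Gauss--Bonnet forces $M \geq 3$ since the $M$ terms $1-\gamma_j$ are each $<1$ and sum to $2$. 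To see that the associated point $m \in \overline{\mathcal{M}}_{0,N}$ is well defined and independent of the realizing family, I would invoke Corollary~\ref{lem:forget}: on each component the conformal configuration of special points is determined, up to $PSL(2,\C)$ and scale, by the isometry type of the corresponding bubble (respectively of $g_0$), so the bubble tree fixes $m$ uniquely.

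The main obstacle I anticipate is the precise matching of the two tree constructions --- Ghys's order-of-vanishing tree $\mathcal{T}(S)$ and the dual graph of the nodal curve --- together with the bookkeeping that the positions of the finite cone points of each bubble (the rescaled leading jets in Equation~\eqref{eq:ztilde}) agree, as marked points on $\CP^1$, with the node and marking positions on the matching component. A further point requiring care is that a single section $s(t)$ in Theorem~\ref{thm:flatP1} only traverses one root-to-leaf path of the tree, so the correspondence must be stated at the level of the whole tree $\mathcal{T}(S)$, each bubble being recovered by a section through its vertex; checking that the recovered bubble is independent of this choice, and that the final point $m$ depends only on the combinatorial-metric bubble tree and not on the realizing $p_i(t)$, is where the bulk of the verification lies.
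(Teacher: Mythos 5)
Your proposal takes essentially the same route as the paper's proof: the identical weight dictionary ($\beta$-principal component $\leftrightarrow$ limiting compact metric $g_0$, each other component $\leftrightarrow$ an infinite flat bubble whose unique heavy node $b(y)>1$ is the cone end of angle $2\pi(b(y)-1)$, subtrees of the rooted dual tree $\leftrightarrow$ metric bubble trees), with the converse assembled by gluing one $\CP^1$ per bubble-tree vertex exactly as the paper does. The only cosmetic difference is that the paper realizes a boundary point $m$ via a holomorphic arc $F:\Delta\to\overline{\mathcal{M}}_{0,N}$ with $F(\Delta^*)\subset\mathcal{M}_{0,N}$ rather than by explicitly constructing the $p_i(t)$ with prescribed separation depths, and your additional checks (weight consistency, stability, $M\geq 3$) just make explicit what the paper leaves implicit.
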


\begin{proof}
Let $m \in \overline{\mathcal{M}}_{0,N}$ and let $F: \Delta \to \overline{\mathcal{M}}_{0,N}$ be a holomorphic map with $F(0)=m$ and $F(\Delta^*) \subset \mathcal{M}_{0,N}$. Let $C_m$ be the nodal curve with marked points $x_1, \ldots, x_N$ corresponding to $m$. The irreducible components of $C_m$ make a tree and we set the root of this tree to be the $\beta$-principal component. 
The map $F$ gives us a family of flat metrics (Corollary \ref{lem:forget}) for which
the $\beta$-principal component of $C_m$ represents the limiting compact metric in $\CP^1$. Every other component $C_j$ has a unique nodal point $y$ with $b(y) > 1$ and represents an infinite flat metric with a cone end of total angle $2\pi(b(y)-1)$ and cone points of angles $2\pi(1-b(x_i))$ at the $x_i \in C_j$ and $2\pi(1-b(y_i))$ at all the other nodes $y_i \in C_j$ which are different from $y$. The children $q_1, \ldots, q_k$ of the $\beta$-principal component represent clusters of cone points that coalesce to single cone points in the limiting compact metric. The sub-trees which have as roots $q_1, \ldots, q_k$ represent the metric bubble trees that arise after taking rescaled limits as in Theorem \ref{thm:flatP1}. Conversely, given a family of metrics as in Theorem \ref{thm:flatP1} we define a nodal curve $C_m$ with marked points $x_1, \ldots, x_N$ whose irreducible components are the vertices of the metric bubble tree, and two such curves meet at a nodal point $y_j$ if and only if their corresponding vertices are connected. The marked points $x_i$ are the limiting (non-clustered) cone points of the family while the nodal points $y_i$ represent a collision of a cluster of cone points for one component and a cone end for the other.   
\end{proof}

\begin{remark} The Deligne-Mumford compactification carries also the differential geometric meaning as moduli compactification of \emph{hyperbolic metrics} with cusps at the marked points, with further cusps' formation at the nodal points. Thus the above theorem provides a different differential geometric interpretation of the same moduli space. It may be interesting to study further if the Hassett's moduli compactifications \cite{hassett} provide also this combined meaning of moduli of conical hyperbolic metrics (with their degenerations) and bubbles (up to certain scale). See also the discussion in Section \ref{mK}. 
\end{remark}

\section{Bubbling in two dimensions}\label{sec:2dim}

It is well-known that the singularities forming in non-collapsing sequences of Kähler-Einstein manifolds of dimension $2$ are isolated orbifold singularities \cite{Anderson, BKN} bubbling ALE spaces \cite{Bando}. Here, in analogy to what we described in the previous section, we investigate relations of bubbling and algebraic geometry for the simplest type of such singularities, namely $A_k$-singularities, showing that the picture is essentially analogous to the one dimensional log case. However, we also point out (section \ref{Log2}) that if one instead considers the more general case of log KE  metrics (so conical along a divisor) in this dimension, the bubbling picture seems to be more complicated, and related to the jumping phenomena as recently pointed out in \cite{sun}.

\subsection{The $A_k$-singularity}

Let $k$ be a positive integer and consider the cyclic group  $\Gamma_{k+1} \subset SU(2)$ of order $k+1$ generated by the diagonal matrix with eigenvalues $\exp(2\pi i / (k+1))$ and $\exp(-2\pi i/ (k+1))$. The polynomial functions $u=X_1^{k+1}$, $v = X_2^{k+1}$, and $z = X_1 X_2$ are invariant under the action of $\Gamma_{k+1}$ on $\C^2$ and give an isomorphism between the orbifold quotient and the $A_k$-singularity
\begin{equation}
    \C^2/ \Gamma_{k+1} \cong \{ uv = z^{k+1} \} \subset \C^3 .
\end{equation}

The group $\Gamma_{k+1}$ preserves the Euclidean metric on $\C^2$ and it acts freely on the unit $3$-sphere. Thus the $A_k$-singularity comes equipped with a flat K\"ahler cone metric $d\rho^2 + \rho^2 g_{S^3/\Gamma_{k+1}}$ where
\[
\rho^2 = |u|^{\frac{2}{k+1}} + |v|^{\frac{2}{k+1}}
\]
measures the intrinsic squared distance to the vertex located at $0$. The linear $\C^*$-action on $\C^3$ with weights $(k+1, k+1, 2)$ given by
\begin{equation}\label{eq:akweights}
    \lambda \cdot (u, v, z) = (\lambda^{k+1} u, \lambda^{k+1} v , \lambda^{2}z)
\end{equation}
preserves the $A_k$-singularity and scales the intrinsic distance by $|\lambda|$.

\subsection{Gibbons-Hawking ansatz}
We recall the Gibbons-Hawking construction of ALE manifolds of type $A_k$.
Let $x_1, \ldots, x_{k+1}$ be distinct points in $\R^3$ and let $f$ be the harmonic function
\begin{equation}
    f(x) = \frac{1}{2} \sum_{i=1}^{k+1} \frac{1}{|x-x_i|} .
\end{equation}

Let $\pi_0: M_0 \to \R^3 \setminus \{x_1, \ldots, x_{k+1}\}$ be the circle bundle with first Chern class $-1$ at spheres around the punctures equipped with a connection $\eta \in \Omega^1(M_0)$ such that 
\begin{equation}
    d\eta = - *df .
\end{equation}
Then 
\begin{equation}
    g = f g_{\R^3} + f^{-1} \eta^2
\end{equation}
defines a complete hyperk\"ahler metric on a $4$-manifold $M = M_0 \cup \{\Tilde{x}_i\}$ obtained by adding $k+1$ points $\Tilde{x}_1, \ldots, \Tilde{x}_{k+1}$ to $M_0$. The metric $g$ is asymptotic at infinity to the flat orbifold $\C^2/\Gamma_{k+1}$ where $\Gamma_{k+1} \subset SU(2)$ is the cyclic group generated by the diagonal matrix with eigenvalues $\exp(2\pi i / (k+1))$ and $\exp(-2\pi i/ (k+1))$. 
The manifold $M$ admits a circle action which preserves the hyperk\"ahler structure with moment map $\pi: M \to \R^3$.
The circle action has $k+1$ fixed points at $\Tilde{x}_1, \ldots, \Tilde{x}_{k+1}$ and $\pi(\Tilde{x}_i) = x_i$, while the restriction of $\pi$ to $M_0$ is equal to $\pi_0$. 

Every vector $v$ in the unit sphere $S^2 \subset \R^3$ determines a parallel complex structure $I_v$ that sends the horizontal lift of the constant vector field $v$ to the derivative of the circle action.
Let $x^1, x^2, x^3$ be linear coordinates in $\R^3$. Consider the complex structure $I$ determined by the $\p/\p x_3$ vector field. Then $z = x^1 + i x^2$ is a circle invariant holomorphic function on $(M,I)$. As shown in \cite{lebrun}, one can further produce holomorphic functions $u,v$ on $(M,I)$ of weights $1,-1$ for the circle action which give an equivariant biholomorphism between $(M, I)$  and the complex surface in $\C^3$ defined by the equation
\[
uv = \prod_{i=1}^{k+1} (z - z_i) 
\]
where $z_i = z(x_i)$, equipped with the circle action $e^{it} \cdot (u, v, z) = (e^{it}u, e^{-it}v, z)$.

If $s \subset \R^3$ is a segment that connects two points $x_i, x_j$ then the preimage $\pi^{-1}(s)$ is a $2$-sphere $S_{ij} \subset M$. The second homology group $H_2(M, \mathbb{Z})$ is generated by such spheres. If $v$ is a unit vector in $\R^3$ corresponding to a parallel complex structure $I_v$ on $M$ then the cohomology class of the K\"ahler form $\omega_v = g (I_v \cdot, \cdot)$ is determined by its pairing with the $2$-spheres $S_{ij}$ given by
\begin{equation}
    \frac{1}{2\pi} \int_{S_{ij}} \omega_v = \langle v, x_i - x_j \rangle 
\end{equation}
where $\langle \cdot, \cdot \rangle$ denotes the Euclidean inner product in $\R^3$. In particular, if $v = \p / \p x^3$ and the cohomology class of the K\"ahler form $\omega = \omega_v$ vanishes then we can assume that all the points $x_i$ lie on the plane $x^3=0$ which we identify with $\C$ via $z= x^1 + i x^2$.

\begin{remark}
All the above can be extended to ALE orbifolds. In this setting the points $x_i$ have multiplicities $m_i \in \mathbb{Z}_{\geq 1}$. The harmonic potential is
\[f(x) = \frac{1}{2} \sum_i \frac{m_i}{|x-x_i|}\]
and the metric has orbifold singularities of type $A_{m_i-1}$ at the points with $m_i > 1$. The asymptotic cone at infinity is $\C^2/\Gamma_{k+1}$ where $k+1 = \sum_i m_i$.
\end{remark}

\subsection{Local bubbling models for $A_k$-singularities}
We provide model families of ALE manifolds of type $A_k$. Let $z_i(t)$ for $i=1, \ldots, k+1$ be holomorphic functions of $t$, for $t$ in the unit disc $\Delta$ 
with $z_i(t) \neq z_j (t)$ for $i\neq j$ and $t \in \Delta^*$ and $z_i(0) = 0$ for all $1 \leq i \leq k+1$.
We consider the family of Gibbons-Hawking metrics $g_t$ given by the harmonic potentials
\[
f_t(x) = \frac{1}{2} \sum_i \frac{1}{|x - x_i(t)|}
\]
where $x_i(t) = (z_i(t), 0)$ under the identification of $\R^3$ with $\C \times \R$ given by $(x^1, x^2, x^3) \mapsto (z, x^3)$ where $z = x^1 + i x^2$. We take the complex structure given by the $x_3$-axis, so the corresponding complex surfaces are
\begin{equation}
  X_t = \{uv = \prod_{i=1}^{k+1} (z - z_i(t)) \} \subset \C^3
\end{equation}
and the K\"ahler forms $\omega_t$ are all $\p\Bar{\p}$-exact by our choice of points $x_i(t)$.

Let $\mathcal{X}$ be the complex $3$-fold which is the total space of the family. So we have a holomorphic submersion
\[
\Pi : \mathcal{X} \to \Delta
\]
whose fibers are the complex surfaces $X_t$.
Same as in Section \ref{sec:1dim} we can construct a tree $\mathcal{T} = \mathcal{T}(S)$ from the set of holomorphic functions $S = \{z_1(t), \ldots, z_{k+1}(t)\}$ by grouping them according to their relative order of vanishing at the origin.

\begin{theorem}\label{thm:aklimits}
    The set of \emph{non-cone} pointed Gromov-Hausdorff limits
    \begin{equation}\label{eq:ptGHGH}
        h_{\alpha} = \lim_{t \to 0} |t|^{-2\alpha} \cdot (X_t, g_t, \sigma(t))
    \end{equation}
    for $\alpha>0$ and $\sigma$ a holomorphic section of $\Pi$ is in one-to-one correspondence with the set of interior (i.e. non-leaf) vertices of $\mathcal{T}$. Each vertex $\mathbf{v} \in \mathcal{T}$ corresponds to an orbifold ALE space $B_{\mathbf{v}}$ asymptotic to $\C^2/\Gamma_{\ell+1}$ where $\ell+1$ is the number $|\mathbf{v}|$ of functions $z_i(t)$ in the equivalence class represented by $\mathbf{v}$. If $\mathbf{w}$ is a child of $\mathbf{v}$ then it corresponds to an orbifold point of $B_{\mathbf{v}}$ of type $A_{|\mathbf{w}|}$.
\end{theorem}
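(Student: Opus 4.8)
The plan is to mimic the one-dimensional argument of Lemma \ref{lem:rescaledsection}, replacing the explicit conformal rescaling used there by the homogeneity of the Gibbons--Hawking construction under dilations of the monopole configuration in $\R^3$.

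\textbf{Step 1 (homogeneity of the ansatz).} First I would record how the metric $g = f\, g_{\R^3} + f^{-1}\eta^2$ transforms under a dilation $\Phi_\mu\colon\R^3\to\R^3$, $x\mapsto\mu x$, of the monopole positions $x_i$. Since $f_{\{\mu x_i\}}(\mu y) = \mu^{-1} f_{\{x_i\}}(y)$ and $\Phi_\mu^* g_{\R^3} = \mu^2 g_{\R^3}$, and since the Hodge star on $1$-forms in three dimensions has conformal weight one, the defining relation $d\eta = -*df$ is preserved, so $\Phi_\mu$ lifts to a bundle map $\tilde\Phi_\mu$ with
\begin{equation}
\tilde\Phi_\mu^*\, g_{\{\mu x_i\}} = \mu\, g_{\{x_i\}} .
\end{equation}
Thus dilating the configuration by $\mu$ multiplies the hyperkähler metric tensor by $\mu$; this is consistent with the weights \eqref{eq:akweights}, since $z = x^1 + i x^2$ scales linearly. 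This homogeneity is the exact higher-dimensional analogue of the elementary rescaling $g_\epsilon = \epsilon^{2\beta_1+2\beta_2-2} g_1$ of the flat one-dimensional models.

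\textbf{Step 2 (reduction to a configuration limit).} Using Step 1 with $\mu = |t|^{-2\alpha}$, and translating in $\R^3$ (an isometry of $g_t$, which depends only on the relative positions of the $x_i$), the rescaled pointed space in \eqref{eq:ptGHGH} is isometric to the Gibbons--Hawking space of the planar configuration $\{\, |t|^{-2\alpha}(z_i(t)-s(t)) \,\}_{i=1}^{k+1}\subset\C=\{x^3=0\}$, based at the image of $\sigma(t)$, where $s(t)$ is the $z$-coordinate of $\pi(\sigma(t))$. Taking the holomorphic section through an added point $\tilde x_{j_0}$, namely $\sigma(t)=(0,0,z_{j_0}(t))$, keeps the base point on the plane $x^3=0$ for all $t$, so the limit is governed entirely by the planar positions; a general section yields a non-cone limit only when its image tracks the surviving cluster. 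Writing $d(i)$ for the order of vanishing of $z_i - s$ at $0$, a point survives at a finite nonzero position precisely when $d(i)=2\alpha$, collides with the base when $d(i)>2\alpha$, and escapes to infinity when $d(i)<2\alpha$. This is verbatim the order-of-vanishing dichotomy of Lemma \ref{lem:rescaledsection}, and it singles out the critical scales $\alpha_i = d_i/2$, where $d_1<\cdots<d_\ell$ are the distinct vanishing orders, as the only values of $\alpha$ for which more than one point survives.

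\textbf{Step 3 (identification of the bubbles).} At a critical scale $\alpha_i=d_i/2$ the surviving positions are the limits $\lim_{t\to0}|t|^{-d_i}(z_j(t)-s(t))$ over $j$ with $d(j)=d_i$, which by construction are the centres of the children of the interior vertex $\mathbf{v}_i$; a child $\mathbf{w}$ consists of the $|\mathbf{w}|$ functions agreeing to order $>d_i$, which therefore collide into a single surviving point of multiplicity $|\mathbf{w}|$. By the orbifold extension of the Gibbons--Hawking ansatz recorded above, such a point of multiplicity $|\mathbf{w}|$ produces an orbifold singularity modelled on $\C^2/\Gamma_{|\mathbf{w}|}$, while the total number $|\mathbf{v}_i|=\ell+1$ of functions fixes the asymptotic cone $\C^2/\Gamma_{\ell+1}$. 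Since interior vertices of $\mathcal{T}$ have at least two children (single-child nodes being bypassed), each $B_{\mathbf{v}}$ has at least two distinct monopoles and is a genuine ALE orbifold rather than its cone, whereas every non-critical scale leaves at most one surviving point and produces the flat cone, which is excluded. Letting the section and the scale vary over all choices, each interior vertex determines a bubble $B_{\mathbf{v}}$ and every non-cone limit is the bubble of a unique interior vertex, which is the asserted bijection.

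\textbf{Main obstacle.} The substantive point is the actual Gromov--Hausdorff convergence of the rescaled Gibbons--Hawking spaces to the limiting (orbifold) Gibbons--Hawking space. In the one-dimensional case the metrics were explicit conformal factors and uniform convergence of distances was immediate; here $g$ depends on the connection $\eta$, which solves $d\eta=-*df$ and is only determined implicitly. I would therefore argue convergence at the level of the potentials: on compact subsets of $\R^3$ away from the finitely many limit points the rescaled potentials converge locally uniformly, near a colliding cluster of multiplicity $m$ they converge to the orbifold model $\tfrac{m}{2|x-x_*|}+(\text{harmonic})$, and the escaping charges contribute only a term that is asymptotically constant on compacta, accounting for the cone angle at infinity. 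Feeding this, together with the induced convergence of $\eta$ up to gauge, into $g=f\,g_{\R^3}+f^{-1}\eta^2$ yields the pointed Gromov--Hausdorff convergence; obtaining uniform control near the exceptional spheres and across the orbifold degeneration is the part demanding the most care.
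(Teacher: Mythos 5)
Your proposal is correct and follows essentially the same route as the paper's proof: your Step 1 is precisely the paper's scaling identity $\lambda g = \Phi^*\tilde{g}$ obtained by lifting the dilation $m_\lambda$ to the circle bundle, and Steps 2--3 reproduce the paper's recentering via the moment map composed with the section and the order-of-vanishing dichotomy of Lemma \ref{lem:rescaledsection}, yielding the critical scales $\alpha_i = d_i/2$ and the limiting monopole configurations with multiplicities given by the children of each interior vertex. The only additions are that you explicitly flag the pointed Gromov--Hausdorff convergence of Gibbons--Hawking metrics under convergence (with collisions) of monopole configurations, which the paper leaves implicit, and your count $\C^2/\Gamma_{|\mathbf{w}|}$ (i.e.\ an $A_{|\mathbf{w}|-1}$ orbifold point) for a child $\mathbf{w}$ matches the paper's own proof, the statement's ``type $A_{|\mathbf{w}|}$'' being an off-by-one slip there.
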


\begin{proof}
    Let $\lambda>0$ and let $(M, g)$ and $(\Tilde{M}, \Tilde{g})$ be Gibbons-Hawking ALE spaces determined by the monopole points $x_1, \ldots, x_{k+1} \in \R^3$ and $\Tilde{x}_1, \ldots, \Tilde{x}_{k+1} \in \R^3$ where $\Tilde{x}_i = \lambda x_i$ for all $i$. Then the metric $\Tilde{g}$ is isometric to $\lambda g$ by isometries that act transitively on the circle fibres over $0 \in \R^3$. Indeed, if $m_{\lambda}$ denotes the scalar multiplication by $\lambda$ in $\R^3$ then the respective harmonic potentials of the metrics $g, \Tilde{g}$ satisfy
    \[
    \Tilde{f} \circ m_{\lambda} = \lambda^{-1} f .
    \]
    We can lift $m_{\lambda}$ to a circle bundle map $\Phi: M \to \Tilde{M}$ that preserves the respective connections, i.e. $\Phi^* \Tilde{\eta} = \eta$. Pulling back by $\Phi$ we see that
    \begin{equation}\label{eq:ghscale}
        \lambda g = \lambda \left( f g_{\R^3} + f^{-1} \eta^2 \right) = \Phi^* \left( \Tilde{f} g_{\R^3} + \Tilde{f}^{-1} \Tilde{\eta}^2 \right) = \Phi^* \Tilde{g} .
    \end{equation}
    
    Fix a section $\sigma = (u(t), v(t), z(t))$. To prove the lemma it suffices to show that the pointed limits \eqref{eq:ptGHGH} for $\alpha>0$ behave in the same manner as  described in Lemma \ref{lem:rescaledsection}.
    Compose $\sigma$  with the moments maps $\pi_t: X_t \to \R^3$ to obtain $c(t) = (z(t), x^3(t))$ with $c(0)=0$. For each $t$ we change the moments map $\pi_t$ by subtracting the constant $c(t)$. This way we can assume that $c(t)$ is identically zero and the position of the monopole points $x_i(t)$ changes to $\Tilde{x}_i(t) = (\Tilde{z}_i(t), -x^3(t))$ where $\Tilde{z}_i(t) = z_i(t) - z(t)$.
    Let $d_1, \ldots, d_{\ell}$ be the order of vanishing at $t=0$ of $\Tilde{z}_1(t), \ldots, \Tilde{z}_{k+1}(t)$.
    To prevent collision of points with order of vanishing $d_i$ we change $x = (z, x^3)$ to $\Tilde{x} = (\Tilde{z}, x^3)$ where $z = t^{d_i} \Tilde{z}$. Taking the limit as $t \to 0$ all points $\Tilde{z}_j(t)$ whose order of vanishing at $t=0$ is $> d_i$ collide at $0$, all points with order of vanishing is $< d_i$ are sent off to infinity, and all the points with order of vanishing equal to $d_i$ converge to a limiting configuration given by taking the $d_i$-derivatives at $0$. It follows from Equation \eqref{eq:ghscale} that the pointed Gromov-Hausdorff limit \eqref{eq:ptGHGH} for $\alpha = d_i / 2$ is the Gibbons-Hawking ALE orbifold pointed at an $A_{|\mathbf{v}|-1}$ orbifold point where $\mathbf{v} \in \mathcal{T}$ represents all points that equal the section $z(t)$ to order $>d_i$. The configuration of monopole points is $0$ with multiplicity $|\mathbf{v}|$ and then one monopole point for each child $\mathbf{w}$ of $\mathbf{v}$ with multiplicity $|\mathbf{w}|$. In the same way, using Equation \eqref{eq:ghscale}, we see that the pointed Gromov-Hausdorff limit \eqref{eq:ptGHGH} for $d_i/2 < \alpha <d_{i+1}/2$ is the cone $\C^2/\Gamma_{|\mathbf{v}|}$ pointed at its vertex.
\end{proof}

\begin{remark}\label{rmk:ansatz}
    Given the section $\sigma(t) = (u(t), v(t), z(t))$ we can change coordinates to $u = \Tilde{u} + u(t)$, $v = \Tilde{v} + v(t)$, and $z = \Tilde{z} +z(t)$. In $\Tilde{u}, \Tilde{v}, \Tilde{z}$ coordinates the family is
    \begin{equation}\label{eq:akfamily}
        \Tilde{u} \Tilde{v} = \left( \prod_i (\Tilde{z} - \Tilde{z}_i(t)) \right) - \ell_t(\Tilde{u}, \Tilde{v})
    \end{equation}
    where $\ell_t = v(t)\Tilde{u} + u(t)\Tilde{v} + u(t)v(t)$,  $\Tilde{z}_i(t) = z_i(t) - z(t)$, and the section $\sigma$ is identically $0$. We rescale coordinates $\Tilde{u}, \Tilde{v}, \Tilde{z}$ according to the weights of the cone $\C^2/ \Gamma_{k+1}$ given by Equation \eqref{eq:akweights} as follows
    \begin{equation}
        \Tilde{u} \mapsto t^{(k+1)\lambda} \Tilde{u}, \hspace{2mm} \Tilde{v} \mapsto t^{(k+1)\lambda} \Tilde{v}, \hspace{2mm} \Tilde{z} \mapsto t^{2\lambda} \Tilde{z} .
    \end{equation}
    where $\lambda>0$ is to be determined later. Let $d \geq 1$ be the lowest order of vanishing of the functions $\Tilde{z}_i(t)$ at $t=0$. Take $\lambda = d/2$ then in the rescaled coordinates the family \eqref{eq:akfamily} is
    \begin{equation}\label{eq:akansatz}
        \Tilde{u} \Tilde{v} = \left( \prod_i (\Tilde{z} - t^{-d}\Tilde{z}_i) \right) - \Tilde{\ell}_t
    \end{equation}
    where $\Tilde{\ell}_t \to 0$ as $t \to 0$. Taking the limit as $t \to 0$ in Equation \eqref{eq:akansatz} we recover the minimal bubble limit as in Theorem \ref{thm:aklimits}.
\end{remark}

\begin{remark}\label{rmk:blowupcurvature}
    The norm square $|\text{Riem}_g|^2$ of the Riemann curvature tensor of a Gibbons-Hawking metric $g$ with harmonic potential $f$ is given by the formula
    \[
    |\text{Riem}_g|^2 = \frac{1}{4} \Delta \Delta f^{-1} 
    \]
    where $\Delta$ is the usual Laplacian of $\R^3$. It follows from this that in the context of Theorem \ref{thm:aklimits} the curvature of the ALE metrics $g_t$ along the section $\sigma$ satisfies a bound of the form
    \begin{equation}
        |\text{Riem}_{g_t}(\sigma(t))|^2 = O(|t|^{-C})
    \end{equation}
    for uniform $C>0$ as $t\to 0$.
\end{remark}

\subsection{Towards a global construction}

The above local discussion for the bubblings for $A_k$-singularities should similarly describe what happens when we consider a holomorphic family of compact varieties forming such singularities. In particular, by combining the gluing techniques developed in the work of Biquard and Rollin for cscK metrics \cite{biquardrollin} and Ozuch \cite{ozuch} multiscale analysis of non-collapsed limits of Einstein $4$-manifolds \cite{ozuch}, we expect one should be able to prove the following result:

\begin{conjecture}
    Let $\pi:\mathcal{X} \rightarrow \Delta$ a smoothing of a KE orbifold $(X_0, \omega_0)$ with $A_k$-singularities and $Aut(X_0)$ discrete. Then nearby $X_t$ admits KE metrics whose full multiscale bubble tree can be recovered just from local algebraic data of the given family. More precisely, by considering the curve induced by the family in the versal deformation space of the $A_k$-singularity, and by varying sections of the family passing through the singularity, the tree of pointed Gromov-Hausdorff limits at a given singularity matches the local description given in Theorem \ref{thm:aklimits}.
\end{conjecture}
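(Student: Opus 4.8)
The plan is to combine a gluing construction for the existence of KE metrics on the smoothings with a multiscale analysis matching the resulting bubble tree to the local models of Theorem~\ref{thm:aklimits}. I would first establish existence of KE metrics on $X_t$ for small $t$. Since $(X_0,\omega_0)$ has only $A_k$ singularities, the metric tangent cone at each is the flat $\C^2/\Gamma_{k+1}$, and the Ricci-flat ALE hyperk\"ahler $A_k$ spaces of the Gibbons-Hawking construction are the exact asymptotic models to glue in; note that the sign of the Einstein constant of $X_0$ is irrelevant for the bubbles, since the rescaling sends it to zero and the bubbles are Ricci-flat. Following Biquard-Rollin \cite{biquardrollin}, I would build an approximate solution on $X_t$ by interpolating between $\omega_0$ away from the nodes and a rescaled ALE metric near each node, and then perturb. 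The hypothesis that $\operatorname{Aut}(X_0)$ is discrete is precisely what guarantees that the cokernel of the linearized operator vanishes (no obstruction from holomorphic vector fields), so the approximate metric can be corrected to an exact KE metric.

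Next I would make the local-to-global dictionary precise. The miniversal deformation of the $A_k$ singularity $\{uv=z^{k+1}\}$ is $\{uv=z^{k+1}+a_{k-1}z^{k-1}+\cdots+a_0\}$, equivalently the configuration space of $k+1$ points $z_1,\dots,z_{k+1}\in\C$ with fixed center of mass. The family $\pi$ induces, near the singularity, precisely such a holomorphic arc $t\mapsto\{z_i(t)\}$, reproducing the Gibbons-Hawking family of Section~\ref{sec:2dim}. In particular the tree $\mathcal{T}(S)$ built from $S=\{z_i(t)\}$ is exactly the local algebraic datum referred to in the statement.

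The heart of the argument is to show that the rescaled pointed Gromov-Hausdorff limits \eqref{eq:ptGHGH} of the genuine KE metrics agree with those of the local Gibbons-Hawking models. The point is to perform the gluing at all the scales $|t|^{d_i/2}$ dictated by the orders of vanishing $d_i$ of the $z_i(t)$, so that on each annular region between consecutive scales the approximate metric is $C^\infty$-close to the corresponding bubble $B_{\mathbf{v}}$. Invoking Ozuch's multiscale framework \cite{ozuch} for non-collapsed Einstein $4$-manifolds, one controls the correction in weighted norms adapted to this hierarchy of scales, so that it perturbs no limit at any scale. Granting this, the limits for the true KE metrics reduce to those computed in Theorem~\ref{thm:aklimits}, giving the bubble tree as the set of interior vertices of $\mathcal{T}(S)$ with the child vertices $\mathbf{w}$ prescribing the $A_{|\mathbf{w}|}$ orbifold points of each $B_{\mathbf{v}}$.

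The main obstacle is the uniform control of the perturbation across the whole hierarchy of scales as $t\to 0$. One must verify that the weighted estimates degenerate compatibly with the piecewise-linear convex profile $\alpha(\lambda)$ of Lemma~\ref{lem:rescaledsection}, so that no spurious bubble is created and none is lost when passing from the approximate to the exact solution. This is exactly the delicate analytic input that Ozuch's work supplies for a single degeneration; extending it to the $t$-dependent family while simultaneously tracking the combinatorics of $\mathcal{T}(S)$ is the crux of turning this expectation into a theorem.
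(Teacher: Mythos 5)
This statement is recorded in the paper as a \emph{conjecture}: the paper supplies no proof, only the remark immediately preceding it that one ``should be able'' to prove it by combining the gluing techniques of Biquard--Rollin \cite{biquardrollin} with Ozuch's multiscale analysis of non-collapsed Einstein $4$-manifolds \cite{ozuch}. Your proposal reproduces exactly that intended strategy --- gluing rescaled Gibbons--Hawking ALE spaces into $\omega_0$ near the nodes, using discreteness of $\operatorname{Aut}(X_0)$ to kill the cokernel of the linearization, identifying the arc in the miniversal deformation $\{uv=z^{k+1}+a_{k-1}z^{k-1}+\cdots+a_0\}$ with the monopole configuration $S=\{z_i(t)\}$ of Theorem \ref{thm:aklimits}, and invoking a multiscale framework to see that the perturbation does not disturb any rescaled limit. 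But, as you concede yourself in your final paragraph, the decisive step is not carried out: you need perturbation estimates that are \emph{uniform in $t$ across the entire hierarchy of scales} $|t|^{d_1/2},\ldots,|t|^{d_\ell/2}$, degenerating compatibly with the convex profile $\alpha(\lambda)$ of Lemma \ref{lem:rescaledsection}, so that passing from the approximate to the exact KE metric neither creates nor destroys a bubble at any scale. Ozuch's results concern a single degenerating sequence of Einstein metrics; the family version with this uniformity, simultaneously tracking the combinatorics of $\mathcal{T}(S)$, is precisely the open content of the conjecture. So what you have written is a credible program --- the same one the authors propose --- not a proof, and it should be presented as such.

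One point your sketch passes over silently and that must be verified for the local-to-global dictionary to be complete: the models of Theorem \ref{thm:aklimits} take all monopole points in the plane $x^3=0$, equivalently the Kähler forms are $\partial\bar\partial$-exact; in general the $x^3$-coordinates of the monopole points of a Gibbons--Hawking bubble are the periods $\frac{1}{2\pi}\int_{S_{ij}}\omega$ over the vanishing spheres, which are \emph{not} determined by the holomorphic data $\{z_i(t)\}$. In the KE setting this is salvageable --- the vanishing cycles of an $A_k$ smoothing satisfy $K_{X_t}\cdot S_{ij}=0$ by adjunction, and $[\omega_t]$ is proportional to $c_1(X_t)$, so the periods vanish identically and the rescaled limits do lie in the planar family --- but this argument needs to be made explicitly, and it shows the conjecture as stated would require modification in a general cscK class, where the rescaled periods could contribute nonzero $x^3$-offsets invisible to the ``local algebraic data.''
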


\begin{remark}
    The $Aut(X_0)$ conditions is not essential: the important aspect is that the smoothings admit KE metrics. Moreover, the above conjecture, easily extend (at least) to orbifolds admitting $\mathbb{Q}$-Gorenstein smoothings, as well as to the more general constant scalar curvature (cscK) case. 
\end{remark}

\subsection{Logarithmic situation} \label{Log2}
We now describe a simple example in the $2$ dimensional logarithmic situation when some new phenomenon occurs, suggesting that simple algebraic rescalings are not enough to describe the bubbling. This was pointed out first in \cite[p.21]{sun} for the absolute case and here we consider a logarithmic analogue.

We consider a family of smooth curves $C_t$ for $t \neq 0$ (see Equation \eqref{eq:family}) that develop a cuspidal singularity $\{w^2 = z^3\}$ at $t=0$. Moreover, we fix the cone angle parameter $\beta$ to be equal to $5/6$, which corresponds to the \emph{strictly semi-stable} case. In order to explain where the value $\beta=5/6$ comes from, we begin by reviewing the basics of theory of stability for klt pair singularities in this case.

\begin{lemma}
Consider the cuspidal curve $C = \{w^2 = z^3\} \subset \C^2$. 
Then the pair $(\C^2, b \cdot C)$ with $b=1-\beta \in (0,1)$ is klt if and only if the cone angle parameter $\beta$ belongs to the interval $(1/6,1)$.  
\end{lemma}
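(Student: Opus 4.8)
The plan is to compute the log canonical threshold of the pair $(\C^2, b\cdot C)$ at the origin, where $C = \{w^2 = z^3\}$, and to show that the klt condition ($\mathrm{lct} > b$, equivalently all discrepancies strictly greater than $-1$) translates exactly into $b < 5/6$, i.e. $\beta > 1/6$. Since $C$ is smooth away from the cusp, klt-ness is automatic there, so the only issue is at $0$. I would extract the relevant discrepancy by resolving the cusp via a sequence of blow-ups and tracking the coefficients of the exceptional divisors in the pullback of $K_{\C^2} + bC$.

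The key computation is the following. First I would blow up the origin, $\pi_1\colon Y_1 \to \C^2$, with exceptional divisor $E_1$. Standard formulas give $K_{Y_1} = \pi_1^* K_{\C^2} + E_1$ and $\pi_1^* C = \Tilde{C}_1 + m_1 E_1$, where $m_1 = \mathrm{mult}_0(C) = 2$ is the multiplicity of the cusp. Hence the coefficient of $E_1$ in $\pi_1^*(K_{\C^2} + bC) - K_{Y_1}$ is $b\cdot 2 - 1 = 2b - 1$; for klt we need $2b-1 < 1$, which is weaker than what we want, so the binding constraint comes from a deeper exceptional divisor. The strict transform $\Tilde{C}_1$ still meets $E_1$ tangentially (a cusp becomes a tacnode-type contact), and one continues: blowing up the new intersection point produces $E_2$ with log discrepancy governed by the cumulative multiplicities. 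Carrying out the standard cusp resolution (three infinitely-near blow-ups suffice to resolve $w^2 = z^3$), the exceptional divisor that computes the lct of the cusp is the one appearing after the chain $E_1, E_2, E_3$, and its log discrepancy over the pair works out to $a(E) + 1 - b\cdot \mathrm{ord}_E(C)$ with $a(E)$ the usual discrepancy and $\mathrm{ord}_E(C)$ the order of vanishing of $C$ along $E$. Plugging in the multiplicities of the cuspidal curve gives the critical divisor with discrepancy $a(E) = \mathrm{ord}_E(K) $ and coefficient $\mathrm{ord}_E(C)$ whose ratio yields $\mathrm{lct}_0(\C^2, C) = 5/6$.

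The cleanest way to package this is to recall the well-known formula $\mathrm{lct}_0(\C^2, \{w^2 = z^3\}) = \tfrac{1}{2} + \tfrac{1}{3} = \tfrac{5}{6}$, which is the sum of reciprocals of the weights of the quasi-homogeneous singularity (the cusp is weighted-homogeneous of weights $(2,3)$ in $(z,w)$, total degree $6$). Indeed, for a quasi-homogeneous isolated plane curve singularity $\{f=0\}$ with weights $(a_1, a_2)$ one has $\mathrm{lct}_0(\C^2, \{f=0\}) = \min\{1, \tfrac{1}{a_1} + \tfrac{1}{a_2}\}$, and here $\tfrac{1}{2} + \tfrac{1}{3} = \tfrac{5}{6} < 1$. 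The pair $(\C^2, b\cdot C)$ is klt if and only if $b < \mathrm{lct}_0 = 5/6$, equivalently $1 - \beta < 5/6$, i.e. $\beta > 1/6$. Combined with the requirement $b \in (0,1)$, i.e. $\beta \in (0,1)$, this yields the stated interval $\beta \in (1/6, 1)$.

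The main obstacle, and the step requiring the most care, is correctly identifying which exceptional divisor computes the log canonical threshold and verifying its discrepancy and the order of vanishing of $C$ along it. A naive single blow-up gives only the constraint $\beta > 0$, which is too weak; one must follow the infinitely-near points through the full resolution of the cusp and check that the minimum over all divisorial valuations is attained at the weighted-blow-up divisor with weights $(2,3)$. I would justify this either by performing the explicit weighted blow-up $\pi\colon Z \to \C^2$ with $\mathrm{wt}(z,w) = (2,3)$, computing $a(E) = (2+3) - 1 = 4$ and $\mathrm{ord}_E(C) = 6$ so that the log discrepancy is $a(E) + 1 - b\cdot \mathrm{ord}_E(C) = 5 - 6b > 0 \iff b < 5/6$, or by invoking the quasi-homogeneous lct formula directly, and then noting that convexity of the Newton polygon guarantees this valuation is the minimizer.
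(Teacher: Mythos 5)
Your proof is correct, but it takes a genuinely different route from the paper. You work with the birational-geometry definition of klt, computing $\mathrm{lct}_0(\C^2, C) = 5/6$ via the weighted blow-up with $\mathrm{wt}(z,w)=(2,3)$ (log discrepancy $a(E)+1-b\,\mathrm{ord}_E(C) = 5-6b$) and the quasi-homogeneous lct formula. The paper instead uses the \emph{analytic} characterization of klt — finiteness of the integral $I = \int_{B_1} |w^2-z^3|^{-2b}\, d\mu$ — and evaluates it by a dyadic decomposition adapted to the very same $\C^*$-action $\lambda\cdot(z,w)=(\lambda^2 z, \lambda^3 w)$: the annuli $A_k = \lambda^k \cdot A_0$ contribute $a_k = \lambda^{ck} a_0$ with $c = 10-12b$ (the $10$ from the real Jacobian $|\lambda|^{4+6}$, the $-12b$ from weighted homogeneity of degree $6$), so $I<\infty$ iff $10-12b>0$ iff $b<5/6$. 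The two arguments are really the analytic and algebraic shadows of one computation — note $10-12b>0$ is literally $2(5-6b)>0$, your log discrepancy at the weighted blow-up divisor — but each buys something: the paper's is completely self-contained (no resolution, no lct machinery, and integrability on $A_0$ only needs the standing hypothesis $b<1$ along the smooth locus of $C$), while yours identifies the minimizing valuation explicitly, which meshes naturally with the $\C^*$-action used throughout that section of the paper for the stability and tangent-cone discussion. Two small polish points on your write-up: the phrase ``sum of reciprocals of the weights'' should read reciprocals of the \emph{exponents} ($1/3 + 1/2$ for $z^3, w^2$), and rather than appealing somewhat vaguely to ``convexity of the Newton polygon'' to certify the minimizer, you can simply observe that after the three blow-ups the total transform is simple normal crossings (dual graph $E_1 - E_3 - E_2$ with $\tilde C$ attached to $E_3$), so klt-ness reduces to the finite checks $1-b>0$, $2-2b>0$, $3-3b>0$, and $5-6b>0$, of which the last is binding.
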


\begin{proof}
    The pair is klt if and only the integral
    \begin{equation}\label{eq:integral}
    I = \int_{B_1} |w^2-z^3|^{-2b} d\mu    
    \end{equation}
    is finite, where $B_1 \subset \C^2$ is the unit ball and $d\mu$ is the standard Lebesgue measure. In order to evaluate this integral we fix $\lambda \in (0,1)$, say $\lambda=1/2$, and consider the action $\lambda \cdot(z,w) = (\lambda^2 z, \lambda^3 w)$. The function $|w^2-z^3|^{-2b}$ is clearly integrable on the annulus $A_0 = B_1 \setminus B_2$ and we let $a_0 = \int_{A_0} |w^2-z^3|^{-2b} d\mu$. For $k \geq 1$ we let $A_k = \lambda^k \cdot A_0$ and $a_k = \int_{A_k} |w^2-z^3|^{-2b} d\mu$, so the integral \eqref{eq:integral} is finite if and only if the series $\sum a_k$ converges and $I = \sum_{k \geq 0} a_k$. On the other hand, by changing variables $\tz = \lambda^2 z, \tw = \lambda^3 w$ we see that $a_k = \lambda^{ck} a_0$ where $c = 10 - 12b$. Therefore $\sum a_k$ is a geometric series which converges if and only if $c >0$, equivalently $b < 5/6$.  
\end{proof}

Now consider the natural $\C^*$-action which preserves the curve $C = \{w^2 = z^3\}$ where $\lambda \in \C^*$ acts by
\begin{equation}\label{eq:action}
    \lambda \cdot (z, w) = (\lambda^2 z, \lambda^3 w) .
\end{equation}

\begin{lemma}
Let $\beta \in (1/6, 1)$ so that the pair $(\C^2, b \cdot C)$ with $b=1-\beta$ is klt. Then the pair is semistable with respect to the action \eqref{eq:action} if and only if $\beta \leq 5/6$.     
\end{lemma}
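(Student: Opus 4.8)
The plan is to recast semistability under the action \eqref{eq:action} as a minimization problem for the normalized volume functional of Chi Li \cite{chili} (see also \cite{liwangxu}), restricted to the monomial valuations compatible with the torus. The action \eqref{eq:action}, having weights $(2,3)$ on $(z,w)$, induces the monomial valuation $v_0$ on $\mathcal{O}_{\C^2,0}$ with $v_0(z) = 2$ and $v_0(w) = 3$. The notion of being ``semistable with respect to the action'' means precisely that $v_0$ minimizes the normalized volume $\widehat{\mathrm{vol}}_{(\C^2,\,bC)}$ among such valuations; equivalently, that the one-sided Futaki derivatives of $\widehat{\mathrm{vol}}$ at $v_0$ are non-negative. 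So the strategy is to write $\widehat{\mathrm{vol}}$ down explicitly on monomial valuations and read off the sign of its derivative at $v_0$.

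First I would record the two ingredients. For the monomial valuation $v_a$ with weights $a = (a_1, a_2)$, $a_1, a_2 > 0$, the log discrepancy of the pair is
\[
A_{(\C^2,\,bC)}(v_a) = a_1 + a_2 - b\, v_a(w^2 - z^3) = a_1 + a_2 - b\min(2a_2, 3a_1),
\]
while the volume is $\mathrm{vol}(v_a) = (a_1 a_2)^{-1}$, so that
\[
\widehat{\mathrm{vol}}(v_a) = \frac{\big(a_1 + a_2 - b\min(2a_2, 3a_1)\big)^2}{a_1 a_2}.
\]
The valuation $v_0$ sits exactly at the corner $2a_2 = 3a_1$ of the piecewise-linear term, and $A_{(\C^2,\,bC)}(v_0) = 5 - 6b$ is positive precisely in the klt range $b < 5/6$, consistently with the previous lemma (indeed $v_0$ computes the log canonical threshold $5/6$).

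Since $\widehat{\mathrm{vol}}$ is invariant under rescaling the weights, I would reduce to the single variable $s = a_2/a_1$ and study $\phi(s) = \big(1 + s - b\min(2s,3)\big)^2 / s$, smooth away from the corner $s = 3/2$. A direct differentiation shows that the right derivative $\phi'_+(3/2)$ is proportional to $(\tfrac52 - 3b)(\tfrac12 + 3b)$, hence strictly positive for all $b \in (0, 5/6)$, whereas the left derivative $\phi'_-(3/2)$ is proportional to $(\tfrac52 - 3b)(\tfrac12 - 3b)$, whose sign is that of $\tfrac12 - 3b$. Thus $v_0$ is a local minimizer iff $\tfrac12 - 3b \le 0$, i.e. $b \ge 1/6$, i.e. $\beta \le 5/6$, the boundary case $\beta = 5/6$ being exactly the strictly semistable one where $\phi'_-(3/2) = 0$. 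A short analysis of the two linear pieces (each carrying at most one interior critical point, whose position relative to $s = 3/2$ one locates) upgrades this to a genuine global minimum for $\beta \le 5/6$ and, for $\beta > 5/6$, exhibits a smaller value at some $s < 3/2$, giving the stated equivalence.

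The main obstacle is conceptual rather than computational: one must justify that testing \emph{monomial} valuations suffices. The pair $(\C^2, bC)$ is preserved only by the one-dimensional subtorus $\{(\lambda^2, \lambda^3)\}$ rather than the full $(\C^*)^2$, so reducing the normalized-volume minimization to the toric (monomial) case requires invoking the equivariant minimization results, namely that the minimizer of $\widehat{\mathrm{vol}}$ may be taken invariant under a maximal torus of automorphisms \cite{liwangxu}. Pinning down which precise notion of ``semistability with respect to the action'' is in force, and matching it to non-negativity of the Futaki derivative of $\widehat{\mathrm{vol}}$ at $v_0$, is the delicate point; once that identification is made, the computation above is elementary and the threshold $\beta = 5/6$ falls out of the single inequality $\tfrac12 - 3b \le 0$.
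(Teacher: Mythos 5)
Your computation on monomial valuations is correct: for $v_a$ with $v_a(z)=a_1$, $v_a(w)=a_2$ one indeed has $\widehat{\mathrm{vol}}(v_a)=\bigl(a_1+a_2-b\min(2a_2,3a_1)\bigr)^2/(a_1a_2)$, the one-sided derivatives at the corner $s=a_2/a_1=3/2$ have the signs you state, and the threshold $b\geq 1/6$, i.e.\ $\beta\leq 5/6$, falls out exactly as you say. Note, however, that this is a genuinely different route from the paper, which never mentions valuations: the paper quotients $(\C^2,b\cdot C)$ by the action \eqref{eq:action} to obtain the orbifold pair $(\CP^1,\Delta)$ with $\Delta=\tfrac12\cdot 0+\tfrac23\cdot\infty+b\cdot 1$, identifies (semi)stability of the cone with (semi)stability of this quotient, and then invokes the classical fact that a three-point spherical/Troyanov configuration is semistable if and only if the closed triangle inequalities hold; the only nontrivial inequality $2/3\leq 1/2+b$ gives $b\geq 1/6$ in two lines. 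Your normalized-volume approach, when complete, buys a framework that generalizes to higher dimensions and to non-quasi-regular situations, but at the cost of the heavy machinery of Li, Li--Xu and Collins--Székelyhidi, including the identification of ``semistable with respect to the action'' with $\mathrm{wt}_{\xi}$ minimizing $\widehat{\mathrm{vol}}$, which you correctly flag as the delicate point.

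There is, moreover, a genuine gap in your semistable direction ($\beta\leq 5/6\Rightarrow$ semistable): the reduction to \emph{monomial} valuations is not justified by the equivariance result you invoke. The maximal torus of automorphisms of the pair $(\C^2,b\cdot C)$ is precisely the one-parameter group \eqref{eq:action} itself, so torus-equivariant minimization only tells you that the minimizer of $\widehat{\mathrm{vol}}$ is invariant under this one-dimensional torus --- and such invariant valuations need not be monomial. Concretely, since the cusp $C=\{w^2=z^3\}$ is itself invariant, the quasi-monomial valuations $v_t$ along the exceptional divisor of the $(2,3)$-weighted blow-up and the proper transform of $C$, characterized by $v_t(z)=2$, $v_t(w)=3$, $v_t(w^2-z^3)=6+t$ for $t>0$, form a whole ray of invariant non-monomial valuations with $A_{(\C^2,\,bC)}(v_t)=5+t-b(6+t)$, and the Li--Xu criterion requires checking that none of these (indeed, no valuation in $\mathrm{Val}_{\C^2,0}$ at all) beats $v_0$. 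Your argument is fine in the destabilizing direction --- for $\beta>5/6$ a single monomial valuation with smaller normalized volume suffices --- but for $\beta\leq 5/6$ you must either compute $\widehat{\mathrm{vol}}(v_t)$ and show it exceeds $\widehat{\mathrm{vol}}(v_0)=(5-6b)^2/6$, or pass to the quotient orbifold as the paper does, where the problem becomes one-dimensional and the triangle-inequality criterion closes it immediately.
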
 

\begin{proof}
    The quotient of the pair $(\C^2, b \cdot C)$ by the action \eqref{eq:action} is a sphere with three marked points $(\CP^1, \Delta)$ with $\Delta = b_0 \cdot 0 + b_{\infty} \cdot \infty + b \cdot 1$ where $0$ and $\infty$ are the orbifold locus of the quotient map and $b_0 = 1/2, b_{\infty} = 2/3$. The pair $(\C^2, b \cdot C)$ with the action \eqref{eq:action} is stable if and only if $(\CP^1, \Delta)$ is stable. On the other hand, $(\CP^1, \Delta)$ is semi-stable if and only if the coefficients $b_0, b_{\infty}, b$ satisfy the (closed) triangle inequalities. The inequalities $b \leq 1/2 + 2/3$ and $1/2 \leq 2/3 + b$ are always satisfied; therefore  $(\CP^1, \Delta)$ is semi-stable if and only if $2/3 \leq 1/2 + b$, i.e. $b \geq 1/6$.
\end{proof}

The local behaviour of a KE metric $g_{KE}$ with cone angle $2\pi\beta$ along a cuspidal curve $C = \{w^2 = z^3\}$ depends on the cone angle parameter $\beta$ as follows.

\begin{itemize}
\item \textbf{Stable case.} If $\beta \in (1/6,5/6)$ then there is a flat K\"ahler cone metric $g_C$ on $\C^2$ with cone angle $2\pi\beta$ along $C$. We can write
$g_C = d\rho^2 + \rho^2 \Bar{g}$ where $\rho>0$ is the intrinsic distance to the origin and $\Bar{g}$ is a metric on the $3$-sphere with constant sectional curvature $1$ and cone angle $2\pi\beta$ along the trefoil knot $C \cap S^3$. It is proved in \cite{deborbonspotti} that the metric $g_{KE}$ is asymptotic at a polynomial rate to $g_C$, i.e. in suitable local coordinates $|g_{KE} - g_C|_{g_C} = O(\rho^\mu)$ for some $\mu>0$ as $\rho \to 0$.

\item \textbf{Unstable case.}
If $\beta \in (5/6, 1)$ then \cite{dbe} produce a Calabi-Yau metric $g_{CY}$ in a neighbourhood of $0 \in \C^2$ with cone angle $2\pi\beta$ along $C \setminus \{0\}$ whose tangent cone at the origin is equal to the product $\C \times \C_{\gamma}$ where $\gamma = 2\beta -1$. Following \cite{chiuszek} we expect that $g_{KE}$ has polynomial convergence to (a multiple) of $g_{CY}$ at the level of potentials.

\item \textbf{Strictly semistable case.}
If $\beta = 5/6$ then the tangent cone at $0$ of $g_{KE}$ as predicted algebraically by the theory of normalized volumes (see \cite[Section 7]{deborbonspotti}) is the product $\C \times \C_{\gamma}$ where $\gamma = 2\beta -1$, same as in the unstable case. However, in this case $g_{KE}$ is only expected to converge to its tangent cone at a logarithmic rate.
\end{itemize}

\begin{remark}
    We can provide a geometric interpretation for the change of tangent cone as $\beta \to 5/6$.
    In the stable case $\beta \in (1/6, 5/6)$ there is a metric $g_{\CP^1}$ on the $2$-sphere with $3$ cone points of total angle $2\pi/3, \pi$, and $2\pi\beta$ which lifts through the Seifert fibration  $S^3 \to \CP^1$ given by the quotient projection by the action \eqref{eq:action} to the constant curvature $1$ metric $\Bar{g}$ on $S^3$ that is the link of the tangent cone $g_C = d\rho^2 + \rho^2 \Bar{g}$. The metric $g_{\CP^1}$ is the double of a spherical triangle with angles $\pi/3, \pi/2$, and $\pi\beta$. As $\beta \to 5/6$ the triangle converges to a spherical bigon with angles $\pi/3$. Taking the double of the bigon and lifting through the Seifert fibration gives a constant curvature $1$ metric on the $3$-sphere with cone angle $2\pi\gamma$ (with $\gamma=2/3$) along the circle lying over the $1/2$ orbifold point which is the link of the tangent cone $\C \times \C_{\gamma}$ when $\beta=5/6$.
\end{remark}


The Euler vector field $e$ and the metric dilations $d_{\lambda}$ for $\lambda>0$ of the tangent cone are as follows.

\begin{itemize}
\item \textbf{Stable case.} If $\beta \in (1/6, 5/6)$ then
\begin{equation}\label{eq:euler1}
e = \frac{2}{\alpha} z \frac{\p}{\p z} + \frac{3}{\alpha} w \frac{\p}{\p w} \hspace{2mm} \mbox{ and } \hspace{2mm}
d_{\lambda} (z,w) = (\lambda^{2/\alpha} z, \lambda^{3/\alpha}w) ,
\end{equation}
where $\alpha = 3\beta - (1/2)$. The Euler vector field is tangent to $C=\{w^2=z^3\}$ and the metric dilations preserve this curve.   

\item \textbf{Unstable case.}
If $\beta \in (5/6, 1)$ then
\begin{equation}\label{eq:euler2}
e = z \frac{\p}{\p z} + \frac{1}{\gamma} w \frac{\p}{\p w} \hspace{2mm} \mbox{ and } \hspace{2mm}
d_{\lambda} (z,w) = (\lambda z, \lambda^{1/\gamma}w) .
\end{equation}
This time the Euler vector field of the cone is \emph{not} tangent to the cusp $C=\{w^2=z^3\}$ and the metric dilations of the cone degenerate the curve $C$ into the line $\{w=0\}$. More precisely, if we let $z = \lambda \tz$, $w = \lambda^{1/\gamma} \tw$ then the curve $C$ is given by $\tw^2 = \lambda^{3-(2/\gamma)} \tz^3$. The condition $\beta>5/6$ is equivalent to $3-(2/\gamma)>0$, hence the preimage of $C$ under the action $d_{\lambda}$ given by Equation \eqref{eq:euler2} converges to the double line $\{\tw^2=0\}$ as $\lambda \to 0$.  

\item \textbf{Strictly semistable case.}
If $\beta=5/6$ then the two vector fields and actions given by Equations \eqref{eq:euler1} and \eqref{eq:euler2} agree. The cuspidal curve $C$ is invariant under the action, however the tangent cone is $\C \times \C_{\gamma}$ and the degeneration of the pair $(\C^2, C)$ to its tangent cone is through a different (non-canonical) action, such as $\lambda \cdot (z,w) = (\lambda z, \lambda w)$ for example. 
\end{itemize}

From now on we focus on the strictly semistable case and fix $\beta=5/6$.
Consider a sequence of KE metrics with cone angle $2\pi\beta$ along a family of smooth curves $C_t$ that develop a cuspidal singularity at $t=0$, say given by
\begin{equation}\label{eq:family}
C_t = \{w^2 = z^3 + tz\} .
\end{equation}

We rescale the family of curves \eqref{eq:family} using the weights $(1,3/2)$ of $\C \times \C_{\gamma}$ (where $\gamma = 2\beta-1 = 2/3$) which is the tangent cone at the origin of $g_0$, as in the case of $A_k$-singularities (see Remark \ref{rmk:ansatz}). We let $z = t^c \tz$, $w = t^{3c/2} \tw$ and take $c=1/2$ to obtain in the limit as $t \to 0$ the curve
\begin{equation}\label{eq:ssbubble}
    \tw^2 = \tz^3 + \tz .
\end{equation}

\begin{expectation}
    There is no CY metric on $\C^2$ with cone angle $2\pi\beta$ along the curve \eqref{eq:ssbubble} and whose tangent cone at infinity is $\C \times \C_{\gamma}$, where $\beta=5/6$ and $\gamma=2\beta-1$.
\end{expectation}

Indeed, if such a metric exists then rescaling the coordinates by the weights $(1,3/2)$ of the tangent cone at infinity $\tz \mapsto \lambda \tz$, $\tw \mapsto \lambda^{3/2} \tw$ degenerates the curve \eqref{eq:ssbubble} to
\[
\tw^2 = \tz^3 + \lambda^{-2} \tz .
\]
which converges to $\tw^2 = \tz^3$ as $\lambda \to \infty$. By the results of \cite{sunzhang} we would expect that there is a CY metric on $\C^2$ with cone angle $2\pi\beta$ along cuspidal curve $\tw^2 = \tz^3$ whose tangent cone at infinity is $\C \times \C_{\gamma}$. However, the tangent cone at the origin should also be $\C \times \C_{\gamma}$ and by Bishop-Gromov the metric must be a cone but this is impossible as jumping occurs.

On the other hand, if we rescale the family \eqref{eq:family} by $z = t \tz$, $w = t \tw$ we obtain $\tw^2 = t \tz^3 + \tz$ which converges to
\begin{equation}\label{eq:parabola}
    \tw^2 = \tz 
\end{equation}
as $t \to 0$. 

\begin{expectation}
    The bubble of the family \eqref{eq:family} is a CY metric on $\C^2$ with cone angle $2\pi\beta$ along the parabola \eqref{eq:parabola} and tangent cone $\C \times \C_{\gamma}$ at infinity.
\end{expectation}

If we rescale the parabola by the weights of the tangent cone $\tz \mapsto \lambda \tz$, $\tw \mapsto \lambda^{3/2} \tw$ we obtain the family $\tw^2 = \lambda^{-2} \tz$ which converges to the double line $\tw^2 =0$ as $\lambda \to \infty$, as wanted. 



\section{Towards an algebro-geometric picture of bubbling and possible multiscale K-moduli compactifications $\overline{\mathcal{M}}^\lambda$}\label{sec:highdim}

In this last section we  will first propose a tentative more invariant algebraic picture for detecting the bubbles, slightly reinforcing Sun's conjecture in \cite{sun}. Finally, we will formulate a more general framework to study bubbling, by speculating about the existence of what we call \emph{multiscale K-moduli compactifications} $\mathcal{T M}^K$, that is  birational modifications of the K-moduli spaces which parameterize algebraic spaces supporting all the possible multiscale bubble limits of non-collapsing KE metrics.

\subsection{Algorithm for the algebro-geometric detection of the metric bubble tree of a $1$-dimensional family}

Suppose we have a flat holomorphic family $$\pi:\mathcal{X} \rightarrow \Delta$$ over the complex disc of non-collapsing KE spaces (note that in what follows smoothness is not really required), and take a section $\sigma: \Delta \rightarrow \mathcal{X}$ for the family such that $\sigma(0) \in \mbox{Sing}(X_0)$. 

Working locally, and ignoring convergence issues, we can restrict to consider our family as defined on the local ring $\C[[t]]$ of formal power series, and the section to be a ring morphism $s: R \rightarrow \C[[t]]$ from a finitely generated $\C[[t]]$-algebra $R$. Note that $\sigma(0)\in \mathcal{X}$ is then the image of the unique maximal ideal of  $\C[[t]]$ via the map of schemes induced by $s$, so we can think $\sigma(0)=x \in \emph{Spec}(R)$.  

Now consider the non-Archimedean link $\mbox{Val}_{R_x}$, that is maps $v: R_x \rightarrow \mathbb{R} \cup \{+\infty\}$, satisfying the usual axioms for  valuations on the local ring $R_x$.
The claim is that there is the following inductive procedure (terminating in finite steps), mirroring Sun's differential geometric termination of bubbles. 
Following \cite{sun} we call \emph{minimal bubble} a non-cone rescaled limit whose tangent cone at infinity is equal to the tangent cone at the singularity.

There is a valuation $v_1 \in \mbox{Val}_{R_x}$ such that the following properties hold:
\begin{enumerate}
    \item \emph{Weighted family}. The graded ring $R^1:= \bigoplus_j \frac{I_{k_j}^1}{I_{k_{j+1}}^1}$ is a finitely generated $\C[t]$-algebra, with ideals $I_{k_j}=\{f\in R_x, v_1(f)\geq k_j\}$. Then
    $$\mathcal{W}^1=\mbox{Spec}(R^1) \rightarrow \C$$ 
    is a $ \C^*$-equivariant \emph{weighted family} such that the general fiber $W^1_t$ is a (minimal) \emph{weighted bubble}, a negative weight deformation (see \cite{CH}) of the special fiber $W^1_0$ the \emph{weighted tangent cone}.   
    \item \emph{Bubbling family}. There exists a non-canonical $\C^\ast$-equivariant degeneration of this family to a new equivariant family $$\mathcal{B}^1 \rightarrow \C$$ (called the \emph{bubbling family}) where the general fiber $B^1_t$ is the (minimal) \emph{algebraic bubble}, a negative weight deformation of the central fiber $B^1_0$, the \emph{algebraic tangent cone}. Note that the space of negative weighted deformation should always be finite dimensional, even for a cone with non-isolated singularities.    
    
    \item \emph{Iteration}: By specialization, the original family with its section gives rise to two new families (isomorphic to the original family away from zero) $\Tilde{\mathcal{X}}_t^1$ and $\Tilde{\Tilde{\mathcal{X}}}_t^1$, having the (weighted) bubbles in the fiber over the origin. We are now back to the beginning: there should exists a valuation such that we can repeat the above steps $(1)(2)$, given a new local weighted family $\mathcal{W}^2$ and a new bubbling family $\mathcal{B}^2$, and new iteration families $ \Tilde{\mathcal{X}}_t^2$ and $\Tilde{\Tilde{\mathcal{X}}}_t^2$. The process will repeat a finite number of steps until we reach the situation in which the algebraic tangent cone coincides with $\mathbb{C}^n$. We refer to the corresponding algebraic bubble in the step before that as the \emph{deepest bubble}.
\end{enumerate}

\begin{figure}[h]
    \centering
    \scalebox{.5}{
\begin{tikzpicture}
	\coordinate (a) at (0,0);
	\coordinate (b) at (0,4);
	\coordinate (c) at (0,8);

	\filldraw[] (a) circle (4pt);
	\filldraw[] (b) circle (4pt);
	\filldraw[] (c) circle (4pt);
	
	\draw [-{To[scale=2]}] (a) -- (0,2);
	\draw [-{To[scale=2]}] (b) -- (0,6);

	\draw (a) -- (b);
	\draw (b) -- (c);

	\draw [-{To[scale=2]}] (-6,0) -- (-3,0);
	\draw [-{To[scale=2]}] (-6,4) -- (-3,4);
	\draw [-{To[scale=2]}] (-6,8) -- (-3,8);
	
	\draw [-{To[scale=2]}] (6,0) -- (3,0);
	\draw [-{To[scale=2]}] (6,4) -- (3,4);
	\draw [-{To[scale=2]}] (6,8) -- (3,8);
	
	\draw (-3,0) -- (3,0);
	\draw (-3,4) -- (3,4);
	\draw (-3,8) -- (3,8);
	
	\node[scale=2] at (-6.5, 8.2) {\(\mathcal{B}\)};
	\node[scale=2] at (-6.5, 4.2) {\(\mathcal{W}\)};
	\node[scale=2] at (-6.5, .2) {\(\mathcal{X}\)};
	
	\node[scale=1.5] at (.4, 7.7) {\(B_0\)};
	\node[scale=1.5] at (.4, 3.7) {\(W_0\)};
	\node[scale=1.5] at (.4, -.3) {\(X_0\)};
	
	\node[scale=1.5] at (3.4, 7.6) {\(B_t\)};
	\node[scale=1.5] at (3.4, 3.6) {\(W_t\)};
	\node[scale=1.5] at (3.4, -.4) {\(X_t\)};	
\end{tikzpicture}    
}
\caption{\(2\)-step degeneration of the family \(\mathcal{X}\) to the bubble family \(\mathcal{B}\) for which the general fibre is the minimal bubble \(B\). The central fibres \(X_0 \to W_0 \to B_0\) give the usual \(2\)-step degeneration of the germ \((X_0, x_0)\) to its tangent cone.} 
\label{fig:2stepdeg}
\end{figure}
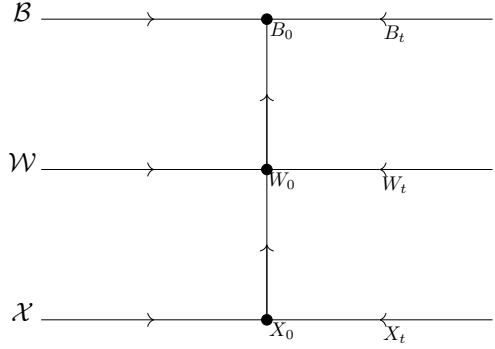

The weighted and bubbling families of the above items $(1)(2)$ are represented in Figure \ref{fig:2stepdeg}. 

\begin{remark}
 Note that, even if we start with a family with compact fibers and only defined locally in $t$, the rescaled local families $\mathcal{W, B}$ of items $(1)(2)$ are going to be affine and defined on all $\C$.  Regarding the crucial step $(3)$, the process should be described by a weighted blow-up of the original family, giving two families still over $\C[[t]]$, say $\Tilde{\mathcal{X}}_t$ and $\Tilde{\Tilde{\mathcal{X}}}_t$,  which  have the bubbles $W_1$ and  the actual metric bubble $B_1$ (as components) in the fiber over the origin.  In particular, $W_1$ and $B_1$ will give (divisorial) valuations corresponding to special points in the Berkovich analytification (over the given section) of the original family $\pi:\mathcal{X} \rightarrow \Delta$ (of course such valuations are expected to be linked to the one given the weighted bubbling local families of steps $(1),(2)$).

\end{remark}

Motivated by the analysis in the previous sections,  we then propose the following.

\begin{expectation}[Relation to differential geometry] The outlined algebraic algorithm above compute all possible metric bubbles (pointed Gromov-Hausdorff limits) of a non-collapsing family of KE metrics along a section $\sigma$. More precisely,

\begin{itemize}
\item  All possible metric bubbles do not depend on the particular sequence of $t_k\rightarrow 0$, but only on the holomorphic section $\sigma$.

\item The algebraic bubbles $B^i_1$ can be identified with all the (non-cone) rescaled metric bubbles, and hence they admit (possibly singular) Calabi-Yau metrics asymptotic to the Calabi-Yau cones $B^i_0$. 

\item The algebraic finite step iterations of the algorithm correspond to the finite number of possible metric bubbles, as described in  \cite{sun}.
\end{itemize}

\end{expectation}

\begin{remark}
    In the above context, for the given flat family $\mathcal{X} \to \Delta$ of non-collapsing KE metrics and an algebraic section $\sigma: \Delta \to \mathcal{X}$, we expect that the curvature of the manifolds $X_t$ at the points $\sigma(t)$ for $t \neq 0$ blows-up at a polynomial rate of the form $|\mbox{Riem}_{g_t}(\sigma(t))| \leq C |t|^{-C}$ for some $C>0$, corresponding to the finite step termination of the algorithm. See Remark \ref{rmk:blowupcurvature} for the case of $2$-dimensional $A_k$ singularities.
\end{remark}



Of course, the crucial aspect of the conjectural picture is to determine a priori via algebraic geometry which valuations will make such identification between algebraic and differential geometric rescalings to hold. As our experiments show (see also Section \ref{sec:ak}), it seems that the valuations are related to the ones giving the normalized volume for the klt singularities in the central fiber $X_0$ (that is, differential geometrically, the ones computing the metric tangent cone) via Li's normalized volume.

\begin{remark}Evocatively, we can think to the above algorithm to provide a \emph{geometric algebraic expansion} of the space $\mathcal{X}$ around the section $\sigma$:
$$\mathcal{X}_{\sigma} \cong \sum_i B^i_1 t^{v_i},$$
where $v_i$ are the valuations computing the tangent cones. 
\end{remark}


More generally, one can ask what happens by letting the sections vary. Our previous examples in Sections \ref{sec:1dim} and \ref{sec:2dim} suggest the following picture: 

\begin{expectation}
    
There are equivalence relations on the set of sections through a point $p\in X_0$ so that two equivalent sections induce the same bubbles up to certain scale. Thus, by varying the sections we expect to be able to algebro geometrically detect the full metric bubble tree for the degenerating family $\pi:\mathcal{X} \rightarrow \Delta$ at a given singularity. Schematically, this looks like a $1$-dimensional tree $\mathcal{T}$, with root given by the space $X_0$ itself and leaves given by all the possible deepest bubbles (removing all flat limiting spaces, or, more generally, if we consider singular families with section $\sigma$ all taking value at singular points, all tangent cones at the generic point of $\sigma$). The segments correspond to the tangent cones.  A choice of a section will determine a non-branching  path from the root to a leaf, as indicated in Figure \ref{fig:polytreepath}.

\end{expectation}

We now discuss some conceptual examples (see also previous section $3.5$ for a logarithmic situation). Steps $(1)(2)$ above can be simply obtained by rescaling a given family centered at at the singularity of the central fiber, where one uses in the vertical directions weights of the tangent cone, and a suitable rescaling on the horizontal $t$ variable, in analogy to the usual computations for the metric tangent cone. Next examples (we will focus on step (3) and its iterations) are, for simplicity, given only locally, and they are still conjectural (but the rescalings we consider are in analogy to the ones that works in low dimension as described in previous sections).

\subsection{$A_k$-singularities}\label{sec:ak}
Let $n \geq 3$ and consider the $n$-dimensional $A_k$-singularity given by
\[
A_k = \{x_1^2 + \ldots + x_n^2 = x_0^{k+1}\} \subset \C^{n+1} .
\]
Suppose that the singularity is strictly unstable, this is
\begin{equation}\label{eq:akunst}
    k+1 > 2 \frac{n-1}{n-2} .
\end{equation}

If the unstable condition \eqref{eq:akunst} holds then the weighted tangent cone $W$ as well as the tangent cone at the origin $C(Y)$ are equal to the product of \(\C\) with the \((n-1)\)-dimensional $A_1$-singularity
\[
W = C(Y) = \C \times A_1^{(n-1)}  .
\]
The weights of the tangent cone are
\begin{equation}
    \textbf{w} = \left( 1, \frac{n-1}{n-2}, \ldots, \frac{n-1}{n-2} \right) .
\end{equation}

The space of negative weight deformations of $C(Y)$ is made of hypersurfaces $B \subset \C^{n+1}$ of the form
\begin{equation}\label{eq:negweight}
B = \{ x_1^2 + \ldots + x_n^2 = P(x_0) \hspace{2mm} | \hspace{2mm} \deg(P) \leq 3 \text{ if } n = 3, \hspace{2mm} \deg(P) \leq 2 \text{ if } n \geq 4\} .    
\end{equation}
In particular, any bubble arising from the formation of an unstable $A_k$-singularity should be of the form given by Equation \eqref{eq:negweight}.

\begin{example}
    Take $(n, k) = (3, 4)$ and consider the family
    \begin{equation}\label{eq:akfamily}
    X_t = \{ x_1^2 + x_2^2 + x_3^2 = x_0 (x_0 + t) (x_0 + t + t^2) (x_0 + t^3) (x_0 + t^3 + t^4)  \} .
    \end{equation}
    We describe rescaled limits centered at the zero section.
    The weights are $\textbf{w} = (1,2,2,2)$. Let $c>0$ and rescale $x_0 = t^c \tilde{x}_0$ and $x_i = t^{2c} \Tilde{x}_i$. Taking $c=2$ we obtain the family
    \begin{equation}\label{eq:rescfam}
    \Tilde{X}_t = \{ \tx_1^2 + \tx_2^2 + \tx_3^2 = \tx_0 (t \tx_0 + 1) (t \tx_0 + 1 + t) (\tx_0 + t) (\tx_0 + t + t^2)  \} .
    \end{equation}

The minimal bubble $B_{\alpha_1}$ is obtained by setting $t=0$ in Equation \eqref{eq:rescfam}
\[
B_{\alpha_1} = \{ \tx_1^2 + \tx_2^2 + \tx_3^2 = \tx_0^3 \} .
\]
The minimal bubble $B_{\alpha_1}$ is the total space of the $3$-dimensional $A_2$-singularity. The $A_2$-singularity is stable (Equation \eqref{eq:akunst} is strictly violated) and admits a Calabi-Yau cone metric found by Li-Sun \cite{lisun}. We expect that $B_{\alpha_1}$ should admit a Calabi-Yau metric asymptotic to the Li-Sun metric at the origin and whose tangent cone at infinity is the product $\C \times (\C^2/\mathbb{Z}_2)$.

Next we rescale the family \eqref{eq:rescfam} using the weights of the $A_2$-singularity. Let $c>0$ and write $\tx_0 = t^{2c} x_0'$, $\tx_i = t^{3c} x_i'$. Setting $c=1/2$ and taking $t \to 0$ we obtain that the second bubble is
\[
B_{\alpha_2} = \{ (x'_1)^2 + (x'_2)^2 + (x'_3)^2 = x'_0 (x'_0+1)^2 \} .
\]
The space $B_{\alpha_2}$ has an isolated $A_1$ singularity at $x_0' = -1$, $x_i'=0$ for $i=1, 2, 3$ and it is otherwise smooth.
We expect that $B_{\alpha_2}$ admits a Calabi-Yau metric asymptotic to the Stenzel cone metric at its singular point and whose tangent cone at infinity is the Li-Sun cone metric on the $A_2$-singularity. Finally, since we are rescaling at the zero section and the origin is a smooth point of $B_{\alpha_2}$, the next rescaled limit is flat $\C^3$ and the process terminates.  See Figure \ref{fig:aklimits}.
\end{example}

\begin{figure}
	\begin{center}	
 \scalebox{.4}{
	\begin{tikzpicture}[xshift=2cm]
	\draw (-2,4) -- (0,0) -- (2,4);
	\draw (-2,4) to[bend right] (2,4) to[bend right] (-2,4);
	\filldraw (0,0) circle (2pt);
	
	\draw[xshift=6cm] (-2,4) to[bend right] (0,0) to[bend right] (2,4);
	\draw[xshift=6cm] (-2,4) to[bend right] (2,4) to[bend right] (-2,4);
	\filldraw[xshift=6cm] (0,0) circle (2pt);
	
	\draw[xshift=12cm] (-2,3) -- (0,0) --(2,3);
	\draw[xshift=12cm] (-2,3) to[bend right] (2,3) to[bend right] (-2,3);
	\filldraw[xshift=12cm] (0,0) circle (2pt);
	
	\draw[xshift=18cm] (-2,4) to[out=-90, in=180] (0,0) to[out=0, in=-135] (1,1.5) to[out=45, in=135] (1.5,1) -- (2,4);
	\filldraw[xshift=18cm] (0,0) circle (2pt);
	\draw[ultra thick, red, xshift=18cm] (1.5, 1) circle (1pt);
	\draw[xshift=18cm] (-2,4) to[bend right] (2,4) to[bend right] (-2, 4);
	
	\draw[xshift=24cm, rounded corners] (-2, 4) -- (-2,0) -- (2,0) -- (2,4);
	
	\draw (-2,-2) -- (26,-2);
	\draw[thick] (-2,-1.6) -- (-2,-2.4);
	\draw[thick] (6,-1.6) -- (6,-2.4);
	\draw[thick] (18,-1.6) -- (18,-2.4);
	
	\node[scale=2] at (-2, -3) {\(0\)};
	\node[scale=2] at (6, -3) {\(\alpha_1\)};
	\node[scale=2] at (18, -3) {\(\alpha_2\)};
	\node[scale=2] at (2, -1.5) {\(\mathbb{C} \times (\mathbb{C}^2 / \mathbb{Z}_2)\)};
	\node[scale=2] at (12, -1.5) {Li-Sun};
	\node[scale=2] at (24, -1.5) {\(\mathbb{C}^3\)};
	\node[scale=2] at (6, 2) {\(B_{\alpha_1}\)};
	\node[scale=2] at (18, 2) {\(B_{\alpha_2}\)};
	
	\end{tikzpicture}	
 }
\caption{The rescaled limits $|t|^{-\alpha} (X_t, x_t)$ where $X_t$ is a non-collapsed family of KE manifolds locally defined by Equation \eqref{eq:akfamily} and the base points $x_t$ are given by the zero section. }
\label{fig:aklimits}
\end{center}
\end{figure}
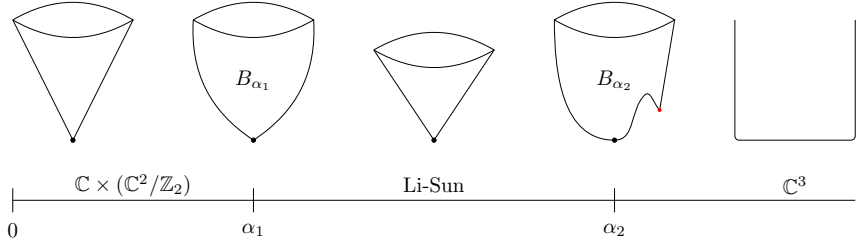

The next two examples illustrate the dependence of the bubbles on the section.

\begin{example}[Non-isolated singularities]
Consider the family
\begin{equation}
    X_t = \{x_1^2 + x_2^2 + x_3^2 = t x_0\} .
\end{equation}
Let $\sigma (t) = (x_0(t), x_1(t), x_2(t))$ be the section with $\sigma(0)=0$. In the generic case that the first derivative $x_0'(0) \neq 0$  the bubble is the product of Eguchi-Hanson with $\C$. In the special case that $x_0'(0)=0$ the bubble is a metric on $\C^3$ with tangent cone at infinity $(\C^2/\mathbb{Z}_2) \times \C$ as found in \cite{yli, conlonrochon, szek}.
\end{example}

\begin{example}
Suppose that the unstable condition \eqref{eq:akunst} holds and consider the family
\begin{equation}
    X_t = \{x_1^2 + \ldots + x_n^2 = \prod_{j=1}^{k+1} \left( x_0 - a_j t \right) \}
\end{equation}
where $a_j \in \C$ and $a_i \neq a_j$ for $i \neq j$. If the section $s(t) = (x_0(t), \ldots, x_n(t))$ with $s(0) = 0$ is generic, in the sense that $x_0(t) = at + \text{(h.o.t)}$ with $a \neq a_j$ for all $j$, then we obtain the product of the Stenzel metric on $\{x_1^2+\ldots+x_n^2=1\}$ with $\C$ as a bubble. However, if the section if of the form $x_0(t) = a_j t + \text{(h.o.t)}$  for some $1 \leq j \leq n$ then we expect the bubble to be one the Calabi-Yau metrics on $\C^n$ with splitting tangent cone at infinity $A_1^{n-1} \times \C$ found in \cite{szek}.   
\end{example}

\begin{remark}
All these local examples can be easily realised as local analytic singularities' formations of families of \emph{compact} KE varieties. In particular, establishing an a-priori algebro-geometric detection of bubbling will give a construction of asymptotically conical CY metrics (also singular and with splitting tangent cones at infinity, as described in the examples above) \emph{directly as bubble limits of compact KE manifolds}, hence avoiding a direct and subtle weighted analysis of the Monge-Amp\`ere equation on (possibly singular) affine spaces but instead relaying on some sort of \emph{multiscale moduli continuity method}. More concretely, let's illustrate this philosophy  with a toy example, showing (and this is a rigorous deduction, thanks to the already existing theory) the emergence of the Stenzel's AC Calabi-Yau metric on the affine variety  $x_1^2 + \ldots + x_n^2 = 1$ \emph{directely} as a bubble limit.

\end{remark}

\begin{proposition} Let $\pi:\mathcal{X} \rightarrow \Delta$ be a non-collapsing family of compact  KE manifolds, and assume that $p\in X_0$ is an $A_1$-singularity. Then the Stenzel CY metric on $x_1^2 + \ldots + x_n^2 = 1$ appears as a minimal bubble (no gluing required!).
\end{proposition}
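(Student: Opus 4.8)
The plan is to identify the correct algebraic rescaling of the family near the $A_1$ point and then appeal to the differential-geometric convergence theory so that no independent gluing argument is needed. First I would invoke the structure theory of non-collapsed degenerations of KE manifolds in complex dimension $n$: since $p \in X_0$ is an isolated $A_1$-singularity $\{x_1^2 + \ldots + x_n^2 = 0\}$, its metric tangent cone is forced (by Donaldson--Sun \cite{donsun2} and the normalized-volume detection of \cite{chili}) to be the standard Calabi--Yau cone over the link of the $A_1$-singularity, whose weights are the symmetric weights $(1, \ldots, 1)$ (equivalently, the Euler vector field is $\sum_i x_i \tfrac{\partial}{\partial x_i}$ with the degree-$2$ equation homogeneous). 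The smoothing of this singularity inside the versal deformation space is $1$-dimensional and given (up to coordinate change) by $\{x_1^2 + \ldots + x_n^2 = \epsilon\}$, so the family $\mathcal{X} \to \Delta$ near $p$ can, after a holomorphic change of fibre coordinates and a reparametrization $t \mapsto \epsilon(t)$ of the base, be written as $\{x_1^2 + \ldots + x_n^2 = t^m u(t)\}$ with $u(0) \neq 0$ and $m \geq 1$ the vanishing order of the smoothing parameter along the section.

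Next I would perform the rescaling dictated by the symmetric weights of the cone. Taking the zero section $\sigma(0) = p$ and writing $x_i = t^{m/2} \tilde{x}_i$, the equation $\{\sum_i x_i^2 = t^m u(t)\}$ becomes $\{\sum_i \tilde{x}_i^2 = u(t)\}$, which as $t \to 0$ limits to the smooth affine quadric $\{\tilde{x}_1^2 + \ldots + \tilde{x}_n^2 = u(0)\}$; after absorbing the nonzero constant $u(0)$ by a further diagonal scaling this is exactly $\{x_1^2 + \ldots + x_n^2 = 1\}$. This is precisely the algebraic bubble predicted by step (2) of the algorithm: a negative-weight deformation of the cone $\{\sum_i x_i^2 = 0\}$, with rescaling exponent $\alpha = m/2$. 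The algebraic computation here is elementary and parallels the $A_k$ computation of Theorem \ref{thm:aklimits} and Remark \ref{rmk:ansatz}, so I would present it briefly rather than in full.

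The content of the proposition, and the reason it can be a \emph{rigorous} deduction rather than a conjecture, is that the pointed Gromov--Hausdorff limit of $(X_t, g_t, \sigma(t))$ at this scale is already known to exist and to be a complete non-compact Calabi--Yau space asymptotic to the cone $\{\sum_i x_i^2 = 0\}$. By the uniqueness of asymptotically conical Calabi--Yau metrics on the smooth affine quadric with this tangent cone at infinity --- the Stenzel metric being the unique such $SO(n, \R)$-invariant AC Calabi--Yau metric (and the quadric is smooth, so there is no competing singular candidate) --- the metric bubble is forced to be the Stenzel metric. The main obstacle, and the step where I would have to be most careful about what is genuinely proved versus expected, is establishing that the rescaled metrics actually converge in the pointed GH sense to a \emph{non-cone} limit at the scale $\alpha = m/2$, i.e.\ that the curvature concentrates at exactly this rate so that the bubble is neither collapsed nor a flat cone. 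For the $A_1$ case this is supplied by the existing $4$-dimensional Gibbons--Hawking picture (Eguchi--Hanson) in Section \ref{sec:2dim} and, in higher dimensions, by the foundational bubbling analysis of S. Sun \cite{sun} together with the established theory of AC Calabi--Yau metrics; because these inputs are cited as available, the uniqueness of the limiting AC metric then pins it down as Stenzel with no gluing construction required.
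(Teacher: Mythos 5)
There is a genuine gap, and it sits exactly where you flagged your own unease. Your argument hinges on the claim that the pointed Gromov--Hausdorff limit of $(X_t, |t|^{-2\alpha} g_t, \sigma(t))$ at the algebraically computed scale $\alpha = m/2$ ``is already known to exist and to be a complete non-compact Calabi--Yau space asymptotic to the cone.'' This is not supplied by \cite{sun} or by anything else cited: Sun's analysis produces a minimal bubble at a \emph{metrically} determined sequence of scales, and the identification of those metric scales with algebraic rescalings of the family is precisely the conjectural Expectation of Section \ref{sec:highdim} of this paper --- proved only in complex dimension two (Theorem \ref{thm:aklimits}), via the Gibbons--Hawking ansatz, which is unavailable for $n \geq 3$. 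So your explicit computation on $\{\sum_i x_i^2 = t^m u(t)\}$, while correct as algebra, carries no metric content by itself, and the step you lean on to close the argument is the unproven heart of the general problem, not an available input.

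The paper's own proof is rigorous precisely because it never matches scales and never rescales coordinates at all. It argues purely on the differential-geometric and deformation-theoretic side: (i) by the general convergence theory the tangent cone at the $A_1$ point is the Calabi ansatz cone; (ii) by \cite{sun} there exists a minimal bubble whose tangent cone at infinity is that cone, and by \cite{CH} this bubble is a \emph{negative weight deformation} of the cone; (iii) the versal deformation space of the $A_1$ singularity is the one-parameter smoothing, so the only non-cone candidate for the bubble's complex structure is the smooth affine quadric $\{x_1^2 + \ldots + x_n^2 = 1\}$; uniqueness of the AC Calabi--Yau metric on the quadric asymptotic to the given cone then forces Stenzel. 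Note that the uniqueness needed here is the Conlon--Hein uniqueness for AC Calabi--Yau metrics with prescribed cone at infinity, not merely uniqueness among $SO(n,\R)$-invariant metrics, which is weaker than what your argument requires. Your final paragraph almost contains the correct proof: if you discard the scale-matching claim and instead insert step (iii) --- the rigidity of the $A_1$ versal deformation, which you never use --- your argument collapses onto the paper's and becomes a genuine deduction rather than a conditional one.
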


\begin{proof} By the established deep general theory of convergence, we know that such metrics GH convergence to a singular KE metric on $X_0$ whose metric tangent cone is the Calabi's ansatz cone metric on the canonical bundle of the KE Fano on the smooth quadric hypersurface. Moreover, by \cite{sun} we know that there should be a rescaled minimal bubble that has such cone as its tangent cone at infinity, and it is a negative weight deformation of it \cite{CH}. However, versal deformations of an $A_1$ singularity are trivial: there is only \emph{a} smoothing. Hence, there must be an AC CY metric on the affine space $x_1^2 + \ldots + x_n^2 = 1$ with that tangent cone at infinity (which has to be precisely the AC Stenzel's metric by uniqueness). 
\end{proof}

\begin{remark}
For explicit examples, just take a family of smooth projective hypersurfaces in $\C \mathbb{P}^N$ of sufficiently higher degree degenereting to a klt variety $X_0$ with an $A_1$ singularity (the generic singularity at the boundary of their moduli space).    
\end{remark}

\subsection{Towards multiscale K-moduli compactifications?} \label{mK}
The previous study of the Deligne-Mumford/Mostow compactifications for the log $1$-dimensional case in Section \ref{sec:1dim}, combined with the study of deformations of $A_k$-singularities in Section \ref{sec:ak}, points towards the following highly speculative picture, which we will now roughly describe in its more optimistic formulation (thus our ``expectations" should be tuned appropriately). In short, we can ask the following:

\begin{question} Are there compactifications of moduli spaces which parametrize bubbles as well?
\end{question}

Let's elaborate a bit more. First one can ask what happens if we let the family $\pi:\mathcal{X} \rightarrow \Delta $ vary while fixing  the central fiber $X_0$. A family $\pi:\mathcal{X} \rightarrow \Delta $ can be considered as a curve in the non-collapsing part of the K-moduli compactification $\overline{\mathcal{M}}$ passing at $[X_0]\in \overline{\mathcal{M}}$. From this point of view, a section is just a lift of that curve to the universal family $\mathcal{U} \rightarrow \mathcal{M}$  (assume we have one for simplicity). We should have a recipe to associate to a one dimensional family the sets of minimal bubbles by varying $p\in \mbox{Sing}(X_0)$. Note that two different families may give the same minimal bubbles at $p$ (e.g., this could happen if the reduction mod $t^2$ of the families give the same tangent at $T_{[X_0]}\mathcal{M}$, but the general picture may be more complicated than that).

Each minimal bubble $B^1$ comes with an equivariant \emph{negative weight} $\C^\ast$ degeneration to its associated tangent cone.  The space of such $\mathbb{Q}$-Gorenstein negative weight deformations  $Def_\mathbb{Q}^-(B_0^0)$ should be some finite dimensional space. Hence, at last in good situations (as we have in the example of $A_k$-singularities in dimension two), we can have a subset of $Z^1\subseteq \mathbb{P} Def_\mathbb{Q}^-(B_0^0)$ to be a \emph{moduli space of first minimal bubble at $p\in X_0$}. In concrete situations we expect $Z$ to depend on the all $X_0$, and not only from obstructions to the local deformation of the singularity. This is related to the fact that there are local to global obstructions to the deformation of singularities (remarkably, this is not the case for del Pezzo surfaces, as shown by Hacking and Prokhorov \cite{hackingprokhorov}). This is true even when jumping phenomena do not occur. Thus, motivated by the bubbling interpretation of the Deligne-Mumford compactification in \ref{sec:1dim}, it is tempting to ask if there is some algebro-geometric moduli spaces (a birational modification of the  K-moduli) $\overline{\mathcal{M}}^{\lambda_1}$ parametrizing in its boundary the first minimal bubbles at the various singularities of $X_0 \in \partial \overline{\mathcal{M}}^{K}$ as discussed above. Moreover, by varying scales and (equivalence classes of) sections/lifts one could hope to find a set of birational modifications of K-moduli spaces parametrizing bubbles at \emph{different scales}  $\mathcal{T M}^K=\{\overline{\mathcal{M}}^{\lambda_i}\}$ (also, which scales are indeed realizable algebro-geometrically? All?).  It may be interesting to investigate if something like that is happening on some simple examples (e.g., K-moduli of del Pezzo surfaces or K3 surfaces).


\bibliographystyle{plainurl}
\bibliography{bub}

\end{document}